\newcommand{\opn}[2]{\newcommand{#1}{\operatorname{#2}}}
\opn{\LS}{LS}
\opn{\Bun}{Bun}
\opn{\Shatz}{Shatz}
\opn{\Pic}{Pic}
\opn{\Conn}{Conn}
\opn{\OOper}{Oper}
\newcommand{\Oper}{\OOper'}
\opn{\Op}{Op}
\opn{\edim}{exp.dim}
\newcommand{\F}{\mathbb{F}}
\newcommand{\K}{\mathbb{K}}
\newcommand{\OO}{\mathbb{O}}
\newcommand{\kk}{\Bbbk}
\newcommand{\frg}{\mathfrak{g}}
\newcommand{\frb}{\mathfrak{b}}
\newcommand{\frp}{\mathfrak{p}}
\newcommand{\frn}{\mathfrak{n}}
\newcommand{\frt}{\mathfrak{t}}
\newcommand{\fru}{\mathfrak{u}}
\newcommand{\ffg}[1]{\frg^{(#1)}}
\newcommand{\sfv}{\mathsf{v}}
\newcommand{\eqnum}{\refstepcounter{equation}\textup{\tagform@{\theequation}}}
\begin{document}
\title{Irreducible connections admit generic oper structures}
\author{Dima Arinkin}
\address{University of Wisconsin, 480 Lincoln Drive, Madison, WI 53706, USA}
\email{arinkin@math.wisc.edu}
%\date{02/16}
\begin{abstract}Let $G$ be a connected reductive group and $X$ be a smooth curve over an algebraically closed field of characteristic zero. We show that every meromorphic
$G$-connection on $X$ admits a possibly degenerate oper structure; in particular,
every irreducible meromorphic $G$-connection admits a generic oper structure.
\end{abstract}
\maketitle

\section{Introduction}\label{sec:intro}

\subsection{Main result}
Let $\kk$ be an algebraically closed field of characteristic zero. Let $G$ be a connected reductive group over $\kk$, and let
$X$ be a smooth connected projective curve over $\kk$. Let $\F=\kk(X)$ be the field of rational functions on $X$.
Thus, $\F/\kk$ is a finitely generated field extension of transcendence degree one.

Our goal is to show that every meromorphic
$G$-connection on $X$ admits a `possibly degenerate oper structure'; in particular,
every irreducible meromorphic $G$-connection admits a generic oper structure. Explicitly, the statement is
formulated in terms of $\F$ as follows.

Fix a Borel subgroup $B\subset G$, and let $N\subset B$ be its
unipotent radical. We denote the Lie algebras of $G\supset B\supset N$ 
by $\frg\supset\frb\supset\frn$, respectively. Now put 
\[\ffg{-1}:=\{x\in\frg:[x,\frn]\subset\frb\}\subset\frg.\] 
Clearly, $\ffg{-1}$ is a vector space containing $\frb$; the quotient $\ffg{-1}/\frb$ naturally splits as a direct sum of root spaces
\[\ffg{-1}/\frb=\bigoplus_{\alpha\in S}\frg_{-\alpha}.\]
Here $S$ is the set of the simple positive roots of $G$.

Let $\Omega_{\F/\kk}$ be the space of K\"ahler differentials of $\F$ over $\kk$. It is a one-dimensional vector space
over $\F$. Let 
\[\Conn_G(\F):=\{\nabla=d+A: A\in\frg\otimes_\kk\Omega_{\F/\kk}\}\]
be the set of $G$-connections on the trivial $G$-bundle over $\spec(\F)$.
The group $G(\F)$ acts on $\Conn_G(\F)$ by gauge transformations;
we write this action as
\[\nabla\mapsto \Ad(g)\nabla=d+(\Ad(g)A-(dg)g^{-1})\qquad (\nabla\in\Conn_G(\F),g\in G(\F)).\]
Denote by $\sim$ the gauge equivalence relation on $\Conn_G(\F)$. Thus,
$\nabla\sim\nabla'$ if and only if there exists $g\in G(\F)$ such that $\nabla'=\Ad(g)\nabla$.

Put
\[\OOper_G(\F):=\{\nabla=d+A:A\in\ffg{-1}\otimes_\kk\Omega_{\F/\kk}\}\subset\Conn_G(\F).\] 
We call $\OOper_G(\F)$ the set of possibly degenerate opers (with respect to the
trivial $B$-structure). 

The main result of this paper is the following theorem.

\begin{THEOREM} \label{th:main}
For any $\nabla\in\Conn_G(\F)$, there exists $\nabla'\in\OOper_G(\F)$ such that $\nabla'\sim\nabla$.
\end{THEOREM}  

The proof of Theorem~\ref{th:main} relies on degeneration of the moduli space of bundles with connections
into the moduli space of Higgs bundles. In Section~\ref{sec:conventions}, we reformulate Theorem~\ref{th:main}
in terms of $G$-bundles on the curve $X$, while also extending it to Higgs fields (Theorem~\ref{th:mainbundle}).

\subsection{Generic oper structures}
We say that
$\nabla=d+A\in\OOper_G(\F)$ 
is \emph{(generically) non-degenerate} if the image of $A$ in $\frg_{-\alpha}\otimes_\kk\Omega_{\F/\kk}$
is non-zero for all simple roots $\alpha\in S$. Denote by 
\[\Oper_G(\F)\subset\OOper_G(\F)\]
the subset of non-degenerate opers. The case of $G=\GL(n)$ (Section~\ref{sec:classical groups}) suggests that
Theorem~\ref{th:main} should still hold if we require that $\nabla'\in\Oper_G(\F)$. However, we do not
know how to prove this stronger statement.

For irreducible connections, the stronger form of Theorem~\ref{th:main} is equivalent to Theorem~\ref{th:main}. Recall that $\nabla\in\Conn_G(\F)$ is \emph{reducible} if there exists a proper parabolic subgroup $P\subset G$
and $\nabla'\in\Conn_P(\F)$ such that $\nabla'\sim\nabla$. Here
\[\Conn_P(\F)=\{d+A:A\in\frp\otimes_\kk\Omega_{\F/\kk}\}\subset\Conn_G(\F),\]
and $\frp$ is the Lie algebra of $P$.

\begin{corollary} \label{co:operirreducible}
Suppose $\nabla\in\Conn_G(\F)$ is irreducible. Then there exists $\nabla'\in\Oper_G(\F)$ such that $\nabla'\sim\nabla$.
\end{corollary}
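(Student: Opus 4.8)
The plan is to derive Corollary~\ref{co:operirreducible} from Theorem~\ref{th:main} by contraposition: I would show that a degenerate oper structure on a connection already exhibits that connection as reducible, so that the hypothesis of irreducibility automatically upgrades ``possibly degenerate'' to ``non-degenerate''. Concretely, I would first invoke Theorem~\ref{th:main} to fix some $\nabla'=d+A\in\OOper_G(\F)$ with $\nabla'\sim\nabla$, where $A\in\ffg{-1}\otimes_\kk\Omega_{\F/\kk}$. Using the splitting $\ffg{-1}=\frb\oplus\bigoplus_{\alpha\in S}\frg_{-\alpha}$, I would write $A_\alpha$ for the component of $A$ in $\frg_{-\alpha}\otimes_\kk\Omega_{\F/\kk}$ and argue by contradiction, assuming $\nabla'$ is degenerate, i.e.\ $A_\beta=0$ for some simple root $\beta\in S$.

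The key step is to produce a proper parabolic containing $A$. I would take $I:=S\setminus\{\beta\}$ and let $P\subset G$ be the associated standard parabolic subgroup, whose Lie algebra $\frp$ contains $\frb$ together with $\frg_{-\alpha}$ for every $\alpha\in I$, but not $\frg_{-\beta}$. Since the $\frg_{-\beta}$-component of $A$ vanishes, the remaining components lie in $\frb\oplus\bigoplus_{\alpha\in I}\frg_{-\alpha}\subset\frp$, so $A\in\frp\otimes_\kk\Omega_{\F/\kk}$ and hence $\nabla'\in\Conn_P(\F)$. Because $\beta\in S$, the subset $I$ is proper and $P$ is a proper parabolic, so $\nabla'\sim\nabla$ witnesses the reducibility of $\nabla$, contradicting the hypothesis. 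Therefore $A_\alpha\neq 0$ for all $\alpha\in S$, which is precisely the statement that $\nabla'\in\Oper_G(\F)$.

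Since Theorem~\ref{th:main} carries out all of the substantive work, I do not anticipate a genuine obstacle in the corollary itself; the entire argument is bookkeeping in the combinatorics of standard parabolics. The one point to check with care is the description of $\frp$ as $\frb$ together with the root spaces $\frg_{\gamma}$ for negative roots $\gamma$ supported on $I$, which is what simultaneously guarantees $\frg_{-\alpha}\subset\frp$ for $\alpha\in I$, ensures $\frg_{-\beta}\not\subset\frp$, and thereby yields both the inclusion $A\in\frp\otimes_\kk\Omega_{\F/\kk}$ and the properness of $P$.
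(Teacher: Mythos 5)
Your proposal is correct and follows the same route as the paper: apply Theorem~\ref{th:main} to obtain $\nabla'\in\OOper_G(\F)$ with $\nabla'\sim\nabla$, and then conclude $\nabla'\in\Oper_G(\F)$ from irreducibility. The paper leaves that last implication as a one-line assertion, whereas you correctly supply the standard-parabolic argument (vanishing of the $\frg_{-\beta}$-component puts $\nabla'$ in $\Conn_P(\F)$ for the proper parabolic attached to $S\setminus\{\beta\}$) that justifies it.
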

\begin{proof} By Theorem~\ref{th:main}, there is $\nabla'\in\OOper_G(\F)$ such that $\nabla'\sim\nabla$.
Since $\nabla$ is irreducible, $\nabla'\in\Oper_G(\F)$.
\end{proof}

\subsection{Classical groups}\label{sec:classical groups}
Suppose $G=\GL(n)$. In this case, Theorem~\ref{th:main} is well known; 
it can be viewed as the correspondence between linear ordinary
differential equations of order $n$ and systems of $n$ linear ordinary differential equations of order one. 

To make the argument explicit, let us fix a non-constant function $t\in\F$. A connection $\nabla\in\Conn_G(\F)$
defines a first order differential operator
\[\nabla_{\frac{d}{dt}}=\frac{d+A}{dt}:\F^n\to\F^n.\]
The operator $\nabla_{\frac{d}{dt}}$ admits a cyclic vector $\vec{v}\in\F^n$ by \cite[Theorem~1]{Katz}. Using the
change-of-basis matrix
\[g=\begin{pmatrix}\vec{v}&\nabla_{\frac{d}{dt}}\vec{v}&\dots&\left(\nabla_{\frac{d}{dt}}\right)^{n-1}\vec{v}\end{pmatrix},\]
we obtain the `Frobenius normal form'
\[\Ad(g^{-1})(\nabla)=d+\begin{pmatrix}
0&0&\dots&0&*\\
dt&0&\dots&0&*\\
0&dt&\dots&0&*\\
\dots&\dots&\dots&\dots&\dots\\
0&0&\dots&dt&*
\end{pmatrix}.\]
In particular, $\Ad(g^{-1})(\nabla)\in\Oper_G(\F)$.

Recently, D.~Kazhdan and T.~Schlank have extended this result to the case of classical group $G$ (\cite{KS}). Moreover, they prove that if $G$ is classical, the space of generic oper structures on a $G$-local system is
contractible. We do not know whether the contractibility holds for exceptional $G$.

\subsection{Connections on the formal disk}
The question addressed in this paper has a local counterpart. Namely, let us replace $\F$ with the field of 
formal Laurent series $\K=\kk((t))$. The sets $\Conn_G(\K)\supset\OOper_G(\K)\supset\Oper_G(\K)$ still make sense in
this setting, and a version of Theorem~\ref{th:main} in this case is due to E.~Frenkel and X.~Zhu:

\begin{THEOREM}[{\cite[Theorem~1]{FZ}}] \label{th:FZ}
For any $\nabla\in\Conn_G(\K)$, there exists $g\in G(\K)$ such that 
\[\Ad(g)\nabla\in\Oper_G(\K).\]
\end{THEOREM}

It is not clear whether Theorem~\ref{th:FZ} implies Theorem~\ref{th:main}. On the other hand, it is not hard to 
derive Theorem~\ref{th:FZ} from Theorem~\ref{th:main}; we sketch the argument in Section~\ref{sec:formaldisk}.

\subsection{Relation to the geometric Langlands conjecture}
Our interest in generic oper structures is in part motivated by the geometric Langlands conjecture. In \cite{G},
D.~Gaitsgory outlines a plan of proof of the geometric Langlands conjecture. The plan contains several gaps, whose size depends on the group $G$ (smallest for $G=\GL(2)$, larger for $G=\GL(n)$, largest for arbitrary $G$). In particular, two crucial steps of the plan (Conjectures~8.2.9 and 10.2.8 of \cite{G}) are described in \cite{G}
as still mysterious in the case of an arbitrary group $G$. In fact, Conjecture 10.2.8 is equivalent to Corollary~\ref{co:operirreducible}; thus, the present paper fills in one of the two major gaps.

\subsection{Organization}
This paper is organized as follows. 

In Section~\ref{sec:conventions}, we define the basic objects that we work with: $G$-bundles with $K$-connections
and $K$-opers. ($K$-connections can be viewed as P.~Deligne's $\lambda$-connections with poles.) We then state
the main result in these terms (Theorem~\ref{th:mainbundle}). In Section~\ref{sc:moduli}, we consider the moduli
stacks of $K$-connections and $K$-opers and reduce Theorem~\ref{th:main} to three geometric propositions,
whose proofs occupy Sections~\ref{sc:closedness}, \ref{sc:good}, and \ref{sc:witness}. Finally, in Section~\ref{sec:formaldisk}, we derive Theorem~\ref{th:FZ} from Theorem~\ref{th:main}.

\subsection{Acknowledgements}
I am very grateful to V.~Drinfeld and D.~Gaitsgory for numerous stimulating discussions. This work was supported in
part by the NSF grant DMS-1452276.

\section{Conventions}\label{sec:conventions}

\subsection{Schemes and stacks}
Fix once and for all an algebraically closed ground field $\kk$ of characteristic zero. All schemes and stacks
that appear in this note are locally of finite type over $\kk$.

Some of the moduli stacks that we work with are actually dg-stacks. This applies to the moduli stack of
bundles with $K$-connections $\LS_{G,K}$ and the moduli stack of $K$-valued opers $\Op_{G,K}$. However, their dg-structure is relatively mild: the stacks are quasi-smooth (which is the dg-version of the notion of local complete
intersection). 
Moreover, the essential part of the argument concerns classical points of the stacks; that is, the points where the dimension and the expected dimension of the stacks are equal (and therefore, the stacks are local complete intersections in the usual sense). We use no deep results on dg-stacks.

\subsection{Bundles and twists}

Let $X$ be a scheme, and let $G$ be an algebraic group. Given a $G$-bundle $\cF$ on $X$ and a scheme $Z$ equipped 
with an action of $G$, we denote by
\[Z_{\cF}:=(Z\times\cF)/G\]
the $\cF$-twisted form of $Z$. Here $G$ acts on $Z\times\cF$ diagonally. We have a natural morphism $Z_\cF\to X$, which is a locally
trivial fibration with fiber $Z$. In particular, if $V$ is a finite-dimensional representation of $G$, the twist $V_\cF$ is a vector
bundle on $X$.

Let $H\subset G$ be a closed subgroup. For any $G$-bundle $\cF$, sections of the morphism $(G/H)_{\cF}\to X$ are in bijection with reductions
of $\cF$ from $G$ to $H$. Recall that a reductions of $\cF$ to $H$ is a closed $H$-invariant subscheme $\cF_H\subset\cF$ which is an
$H$-bundle over $X$.

\subsection{Reductive groups}
Let $G$ be a connected reductive group (over $k$). Fix a Borel subgroup $B\subset G$, and denote by $N\subset B$ its
unipotent radical and by $T=B/N$ the Cartan group of $G$. We denote the Lie algebras of $G\supset B\supset N$ and of $T$
by $\frg\supset\frb\supset\frn$ and $\frt$, respectively.

We denote by $\pi_1(G)$ the algebraic fundamental group of $G$, that is, the quotient of the cocharacter lattice
$\Hom(\gm,T)$ by the coroot lattice. More generally, for any algebraic group $G$, we put $\pi_1(G)=\pi_1(G_0/U)$,
where $G_0\subset G$ is the identity component and $G_0/U$ is its maximal reductive quotient.  

The Lie algebra $\frg$ carries a filtration by vector spaces $\ffg{k}$ such that 
\begin{align*}
\ffg{0}&=\frb,\\
\ffg{1}&=\frn,&\ffg{-1}&=\{a\in\frg:[a,\frn]\subset\frb\},\\
\dots&&\dots\\
\ffg{k+1}&=[\frn,\ffg{k}],&\ffg{-k-1}&=\{a\in\frg:[a,\frn]\subset\ffg{-k}\},\\
\dots&&\dots
\end{align*}
(The filtration is given by the principal $\mathfrak{sl}(2)$ in $\frg$.) Note that
the quotient $\ffg{-1}/\frb$ naturally splits as a direct sum
\begin{equation}\label{eq:-1splitting}
\ffg{-1}/\frb=\bigoplus_{\alpha\in S}\frg_{-\alpha}.
\end{equation}
Here $S$ is the set of the simple positive roots of $G$. 
 
\subsection{$K$-connections} As above, let $X$ be a scheme and let $G$ be an algebraic group.
Let $K$ be a vector bundle on $X$ together with a morphism $\lambda_K:\Omega_X\to K$. 
(Here and everywhere else in the paper, we use the same notation for a vector bundle and its sheaf of sections.) Let $d_K$ be the composition
\[d_K:=\lambda_K\circ d:\cO_X\to\Omega_X\to K;\]
it is a $K$-valued derivation of $\cO_X$. It now makes sense to talk about $K$-connections on $G$-bundles on $X$.

\begin{definition} Let $\cF$ be a $G$-bundle on $X$; denote the projection $\cF\to X$ by $p_\cF$. A \emph{$K$-connection} on $\cF$ is a $G$-equivariant morphism $\nabla:\Omega_\cF\to p_\cF^*K$ such that the composition
\[p_\cF^*\Omega_X\overset{dp_\cF}\to\Omega_\cF\overset\nabla\to p_\cF^*K\]
equals $p_\cF^*\lambda_K$. 
\end{definition}

\begin{example} For $G=\GL(n)$, we can use the equivalence
\[\cF\mapsto (\mathbb A^n)_\cF\]
between $G$-bundles and rank $n$ vector bundles over $X$. Under this equivalence, a $K$-connection on a 
vector bundle $E$ over $X$ 
is simply a $\kk$-linear morphism 
\[\nabla:E\to E\otimes K\]
satisfying the Leibnitz rule
\[\nabla(fs)=f\nabla(s)+s\otimes d_K(f)\qquad (f\in\cO_S,s\in E).\]
\end{example}

\begin{example} \label{ex:matrix}
Given a local trivialization of the $G$-bundle $\cF$, we can write a $K$-connection $\nabla$ on $\cF$ as 
a differential operator
\[d+A\qquad A\in \frg\otimes K.\]
Here $A$ is the connection matrix of $\nabla$ with respect to the trivialization. 
Passing to a different trivialization acts on $A$ as the gauge transform:
\[A\mapsto (\Ad(g)A-(d_K(g))g^{-1}),\]
where $g:X\to G$ is the gauge change.
\end{example}

\begin{remarks}
\begin{enumerate}
\item 
$K$-connections can be viewed as meromorphic $\lambda$-connections; 
conversely, $\lambda$-connections
(introduced by P.~Deligne) are $K$-connections for $K=\Omega_X$ and $\lambda_K\in\kk$ being a scalar.

\item In this paper, we consider connections on a (smooth) curve $X$ with $K$ being a line bundle.  
We then have the following options for $K$ and $\lambda_K$:
\begin{itemize}
\item $K=\Omega_X$, and $\lambda_K$ is the identity map:  $K$-connections are `usual' connections.

\item More generally, suppose $K=\Omega_X(D)$ for an effective divisor $D$ on $X$, and $\lambda_K$ 
is the natural embedding: $K$-valued connections are meromorphic connections whose pole is bounded by $D$.

\item $K$ is any line bundle and $\lambda_K=0$: $K$-connections are $K$-valued Higgs fields, that is, 
sections of the vector bundle $\frg_{\cF}\otimes K$. 
\end{itemize}

\item It is well known that any $G$-bundle on a smooth curve over an algebraically closed field is locally trivial in the Zariski topology (see \cite[Remark~2.b]{DS} for references). Thus, $K$-connections on curves can be described locally by their matrices, as in Example~\ref{ex:matrix}.

\item Since we are interested in $K$-connections on curves only, we ignore the notion of curvature of $K$-connections.
\end{enumerate}
\end{remarks}

Suppose now that $G$ is a connected reductive group.

\begin{definition} A $K$-connection $\nabla$ on a $G$-bundle $\cF$ is \emph{irreducible} if $\cF$ does not admit a $\nabla$-invariant $P$-structure for any proper parabolic subgroup $P\subset G$. Equivalently, the pair $(\cF,\nabla)$
is \emph{irreducible} if it is not induced from a $P$-bundle with a $K$-connection for any proper parabolic subgroup $P\subset G$.
\end{definition}

Note that if $\cF_P$ is a $P$-structure on a $G$-bundle $\cF$, then a $K$-connection $\nabla$ induces a section
\[\nabla_{/\cF_P}\in H^0(X,(\frg/\frp)_{\cF_P}\otimes K),\] 
where $\frp\subset\frg$ is the Lie algebra of $P$. The $P$-structure $\cF_P$ is $\nabla$-invariant if and only $\nabla_{/\cF_P}=0$.

\subsection{Opers}
As above, let $G$ be a connected reductive group, $X$ a scheme, and $K$ be a vector bundle on $X$ together with a morphism 
$\lambda_K:\Omega_X\to K$. (We are only interested in the case when $X$ is a curve and $K$ is a line bundle.)

Let $\cF$ be a $G$-bundle on $X$, and let $\nabla$ be a $K$-valued connection on $\cF$. 

\begin{definition}\label{def:oper}
 A \emph{possibly degenerate oper structure} on $(\cF,\nabla)$ is a reduction $\cF_B$ of 
$\cF$ to the Borel subgroup $B\subset G$ such that for any local trivialization of $\cF$ that is compatible with
$\cF_B$, the matrix of the connection $\nabla$ is a section of $\frg^{(-1)}\otimes K$.

From now on, we omit the word `possibly' and refer to $\cF_B$ as a `degenerate oper structure', and to a triple $(\cF,\nabla,\cF_B)$ as a `degenerate $K$-oper'.
\end{definition}

We can restate Definition~\ref{def:oper} as follows. Given the reduction $\cF_B$ of $\cF$, the $K$-connection $\nabla$ induces a section
\[\nabla_{/\cF_B}\in H^0(X,(\frg/\frb)_{\cF_B}\otimes K).\]
The triple $(\cF,\nabla,\cF_B)$ is a degenerate $K$-oper if and only if
\[\nabla_{/\cF_B}\in H^0(X,(\ffg{-1}/\frb)_{\cF_B}\otimes K)\subset H^0(X,(\frg/\frb)_{\cF_B}\otimes K).\]

\begin{remark} Suppose $(\cF,\nabla,\cF_B)$ is a degenerate $K$-oper. The decomposition \eqref{eq:-1splitting} 
yields an isomorphism
\[H^0(X,(\ffg{-1}/\frb)_{\cF_B}\otimes K)\simeq\bigoplus_{\alpha\in S}H^0\left(X,(\frg_{-\alpha})_{\cF_B}\otimes K)\right).\]
We can therefore write $\nabla_{/\cF_B}=\sum_{\alpha\in S}\nabla_{-\alpha}$ for
\[\nabla_{-\alpha}\in H^0\left(X,(\frg_{-\alpha})_{\cF_B}\otimes K)\right).\]
We say that $(\cF,\nabla,\cF_B)$ is generically a non-degenerate $K$-oper if $\nabla_{-\alpha}\ne 0$ for all $\alpha\in S$. Here the word `generically' refers to the possibility that the components $\nabla_{-\alpha}$ have zeroes on $X$. 
Note in particular that if the underlying $K$-connection $\nabla$ is irreducible, then any degenerate oper structure satisfies this condition. 
\end{remark}

\subsection{Main result}
We can now state the main result of this paper using the language of bundles with $K$-connections. 
Let $X$ be a smooth projective curve, $K$ a line bundle on $X$ equipped with a morphism $\lambda_K:\Omega_X\to K$,
and $G$ a connected reductive group.

\begin{theorem} \label{th:mainbundle}
Any $G$-bundle with $K$-connection on $X$ admits a degenerate oper structure.
\end{theorem}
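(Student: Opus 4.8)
The plan is to recast Theorem~\ref{th:mainbundle} as the surjectivity of a forgetful morphism of moduli stacks and then to establish that surjectivity from three ingredients: a dimension count, a closedness (properness) statement, and an explicit supply of examples. Let $\LS_{G,K}$ be the stack of pairs $(\cF,\nabla)$ of a $G$-bundle with a $K$-connection, and let $\Op_{G,K}$ be the stack of degenerate $K$-opers $(\cF,\nabla,\cF_B)$; forgetting the $B$-reduction gives a morphism $\pi\colon\Op_{G,K}\to\LS_{G,K}$. Theorem~\ref{th:mainbundle} asserts precisely that $\pi$ is surjective on $\kk$-points, and since the connected components of $\LS_{G,K}$ are indexed by the topological type $\pi_1(G)$ and are irreducible, it suffices to show that $\pi$ surjects onto each component.

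First I would record the expected-dimension bookkeeping. Both stacks are quasi-smooth, and the essential point is that $\pi$ has relative expected dimension $0$: the fibre of $\pi$ over a fixed $(\cF,\nabla)$ is the space of oper structures, cut out inside the space of $B$-reductions (sections of $(G/B)_\cF\to X$, with tangent complex governed by $(\frg/\frb)_{\cF_B}$) by the condition that $\nabla_{/\cF_B}$ land in $(\ffg{-1}/\frb)_{\cF_B}\otimes K$ rather than the full $(\frg/\frb)_{\cF_B}\otimes K$. A Riemann--Roch computation shows that the expected codimension of this condition matches the dimension of the space of $B$-reductions, so $\edim\Op_{G,K}=\edim\LS_{G,K}$. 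Over a dense ``good'' open locus of classical points the fibres are genuinely finite, so there the actual and expected dimensions agree, $\pi$ is generically finite, and consequently $\pi$ is dominant onto any component of $\LS_{G,K}$ over which $\Op_{G,K}$ is nonempty. (The case $G=\GL(n)$ is the sanity check: the Frobenius normal form rigidifies the oper structure once a cyclic vector is fixed.)

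The main obstacle, as I see it, is to prove that the image of $\pi$ is closed. The difficulty is that $\pi$ is not proper: although the fibre $G/B$ of the bundle of $B$-reductions is projective, a one-parameter family of reductions can become arbitrarily unstable, so a limiting reduction need not exist. Concretely, given a family $(\cF_s,\nabla_s,\cF_{B,s})$ over a punctured disk with $(\cF_s,\nabla_s)$ extending across $s=0$, I must extend the reduction $\cF_{B,s}$, and for this I would bound the Harder--Narasimhan (Shatz) type of the reductions uniformly in $s$. The oper condition is exactly what supplies the bound: $\nabla$ may lower the $B$-filtration only by the simple-root step, so an unbounded drop in the slope of the associated $T$-bundle would be incompatible with the flatness of $(\cF_s,\nabla_s)$ at $s=0$. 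Making this estimate precise is where the degeneration of $\LS_{G,K}$ into the stack of $K$-valued Higgs fields (the Hodge family scaling $\lambda_K\to 0$) does the work: on the Higgs side the instability of oper reductions is controlled by the geometry of the global nilpotent cone and the properness of the Hitchin-type map, and flatness of the Hodge family propagates the resulting boundedness, hence the valuative criterion, back to the connection fibre.

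Finally I would produce witnesses, i.e.\ show that $\pi$ is nonempty over each component. Here it suffices to exhibit, for every class in $\pi_1(G)$, a single $(\cF,\nabla)$ of that type carrying an oper structure, and this is cleanest in the Higgs limit $\lambda_K=0$: choosing a $B$-reduction whose associated $T$-bundle has suitable degrees makes each simple-root line bundle $(\frg_{-\alpha})_{\cF_B}\otimes K$ effective, and a nonzero section in each $\frg_{-\alpha}$ direction realizes a principal-nilpotent Higgs oper; since $T$-bundles realize every class in $\pi_1(G)$, every component is reached, and these examples deform within the Hodge family to genuine $K$-connection opers. Combining the three ingredients finishes the proof: by the dimension count $\pi$ is dominant onto every component it meets, by the witnesses it meets every component, and by closedness its image is closed, so that the image of $\pi$ is a closed, dense subset of each irreducible component and hence equals it. Thus $\pi$ is surjective, which is Theorem~\ref{th:mainbundle}.
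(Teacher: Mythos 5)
Your skeleton matches the paper's: reduce to surjectivity of the forgetful map $\sfv:\Op_{G,K}\to\LS_{G,K}$ and combine a closedness statement for its image, irreducibility of the components of $\LS_{G,K}$, and explicit witnesses in the Higgs limit $\lambda_K=0$. But two of your three ingredients have genuine gaps. The dimension count is the first: the relative expected dimension of $\sfv$ is \emph{not} zero. The relative tangent complex at $(\cF,\nabla,\cF_B)$ is $R\Gamma((\frg/\frb)_{\cF_B}\to(\frg/\ffg{-1})_{\cF_B}\otimes K)$, whose Euler characteristic depends on $\delta=\deg(\cF_B)$ and is typically positive, and $\sfv$ has positive-dimensional fibers (already for $\GL(2)$, degenerate oper structures on a fixed $(\cF,\nabla)$ include all sufficiently negative line subbundles), so it is not generically finite and dominance cannot be extracted from $\edim\Op_{G,K}=\edim\LS_{G,K}$, which is false. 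What replaces this is a pointwise smoothness criterion: one must build, in each degree $d\in\pi_1(G)$, a Higgs oper $(\cF,A,\cF_B)$ at which the relative tangent complex sits in degree $0$ only, i.e.\ $\ad(A):(\frg/\frb)_{\cF_B}\to(\frg/\ffg{-1})_{\cF_B}\otimes K$ is surjective with kernel $E$ satisfying $H^1(X,E)=0$ (achieved by a generically regular semisimple part $A_0$ plus nonvanishing simple-root components, followed by a Hecke modification at a point to kill $H^1$ and fix the degree). Then $\sfv$ is smooth, hence open, at that point, and the locus of $\lambda_K\in H^0(X,K\otimes\Omega_X^{-1})$ admitting such a witness is open and conical, hence all of $H^0(X,K\otimes\Omega_X^{-1})$ since it contains $0$. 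Your witness construction omits exactly this cohomological condition, and without it the phrase ``deform within the Hodge family'' has no content.

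The second gap is the closedness step. The degrees $\delta$ of oper structures on a fixed connection are genuinely unbounded, so the oper condition does not supply a uniform Harder--Narasimhan bound, and the image of $\sfv$ is not shown to be closed by any version of your argument; the paper only proves it is \emph{specialization-closed}, as a countable union of closed sets, which suffices once the image is known to contain a dense open subset of each irreducible component. The mechanism is not the Hitchin map or the global nilpotent cone but Drinfeld's relative compactification $\overline{\Bun}_B\to\Bun_G$: the oper condition extends to a closed condition on Drinfeld structures, each connected piece of the resulting stack $\overline{\Op}_{G,K}$ is proper over $\LS_{G,K}$, and over a field a Drinfeld oper structure upgrades to an honest $B$-reduction. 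Finally, the irreducibility of $\LS^d_{G,K}$ for $\deg K>\max(0,2g-2)$, which you assert in passing, is itself a substantive input, proved via the Shatz stratification and Serre duality, and is needed to pass from ``nonempty open'' to ``dense''.
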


Clearly, Theorem~\ref{th:mainbundle} implies Theorem~\ref{th:main}. Indeed, any $\nabla\in\Conn_G(\F)$ for $\F=\kk(X)$ can be viewed as a $K$-connection on the trivial $G$-bundle for $K=\Omega_X(D)$, where $D$ is the divisor of poles of $\nabla$. 

\section{Moduli of connections and opers}\label{sc:moduli}
We now turn to the proof of Theorem~\ref{th:mainbundle}. In this section, we study the moduli stacks of $K$-connections and $K$-opers, and reduce Theorem~\ref{th:mainbundle} to three
claims, whose proofs occupy Sections~\ref{sc:closedness}, \ref{sc:good}, and \ref{sc:witness}.

From now on, we fix a connected reductive group $G$, an smooth connected projective curve $X$, and a line bundle $K$ on $X$ equipped with a morphism $\lambda_K:\Omega_X\to K$. Note that the assumption that $X$ is connected
does not restrict the generality of Theorem~\ref{th:mainbundle}.

\subsection{Moduli of bundles}
Let $\Bun_G$ be the stack of $G$-bundles on $X$. Recall that $\Bun_G$ is a smooth stack of pure dimension $(g-1)\dim(G)$. Its connected components are in bijection with $\pi_1(G)$; we denote the component corresponding
to $d\in\pi_1(G)$ by $\Bun^d(G)$. If $\cF\in\Bun^d(G)$, we write $d=\deg(\cF)$.

\subsection{Moduli of connections}
Denote by $\LS_{G,K}$ 
the stack of pairs $(\cF,\nabla)$, where $\cF$ is a $G$-bundle and $\nabla$ is a $K$-connection on $\cF$. Generally speaking, $\LS_{G,K}$ is a dg-stack; however, it is classical if $\deg(K)$ is large enough (see Proposition~\ref{prop:good}). 

The tangent complex to $\LS_{G,K}$ is given by the de Rham complex
\[T_{(\cF,\nabla)}\LS_{G,K}=R\Gamma(X,\frg_{\cF}\overset{\ad(\nabla)}\to\frg_{\cF}\otimes K)[1]\qquad ((\cF,\nabla)\in\LS_{G,K}).\]
It is a perfect complex on $\LS_{G,K}$ of $\Tor$-amplitude $[-1,1]$ and Euler characteristic $\dim(G)\cdot\deg(K)$.
Thus, $\LS_{G,K}$ is a quasi-smooth dg-stack of expected dimension 
\[\edim\LS_{G,K}=\dim(G)\cdot\deg(K).\]

Let 
\[\pi:\LS_{G,K}\to\Bun_G:(\cF,\nabla)\mapsto\cF\] be the natural morphism. Put 
\[\LS_{G,K}^d:=\pi^{-1}(\Bun_G^d)=\{(\cF,\nabla)\in\LS_{G,K}:\deg(\cF)=d\}\qquad (d\in\pi_1(G)).\]
It is easy to see that $\pi$ is schematic (and, in an appropriate sense, $\pi$ is a `dg-affine bundle'). 

\subsection{Moduli of opers}
Denote the stack of degenerate $K$-opers $(\cF,\nabla,\cF_B)$ by $\Op_{G,K}$. Again, $\Op_{G,K}$ is a dg-stack;
in the course of the proof of Theorem~\ref{th:mainbundle}, 
we construct points $(\cF,\nabla,\cF_B)\in\Op_{G,K}$ where the stack is classical.

Let $(\cF,\nabla,\cF_B)$ be a degenerate $K$-oper. The degree $\deg(\cF_B)$ is an element of 
the cocharacter lattice $\Hom(\gm,T)=\pi_1(B)=\pi_1(T)$. Put
\[\Op_{G,K}^\delta:=\{(\cF,\nabla,\cF_B):\deg(\cF_B)=\delta\}\qquad(\delta\in\pi_1(T)).\] 

The tangent complex to $\Op_{G,K}$ is given by 
\[T_{(\cF,\nabla,\cF_B)}\Op_{G,K}=R\Gamma(X,\frb_{\cF_B}\to\ffg{-1}_{\cF_B}\otimes K)[1]\qquad((\cF,\nabla,\cF_B)\in\Op_{G,K}).\]
It is a perfect complex on $\Op_{G,K}$ of $\Tor$-amplitude $[-1,1]$;
its restriction to $\Op_{G,K}^\delta$ has Euler characteristic
\[\dim(\frb)\deg(K)+\sum_{\alpha\in S}(1-g-\langle \alpha,\delta\rangle+\deg(K)).\]
(Recall that $S\subset\Hom(T,\gm)$ is the set of the simple roots of $G$.)
Thus, $\Op_{G,K}^\delta$ is a quasi-smooth dg-stack of expected dimension 
\[\edim\Op_{G,K}^\delta=\dim(\frb)\deg(K)+\sum_{\alpha\in S}(1-g-\langle \alpha,\gamma\rangle+\deg(K))\qquad(\delta\in\pi_1(T)).\]

Let
\[\sfv:\Op_{G,K}\to\LS_{G,K}:(\cF,\nabla,\cF_B)\mapsto(\cF,\nabla)\]
be the natural projection. 
The map $\sfv$ is schematic. Let $T(\Op_{G,K}/\LS_{G,K})$ be the relative tangent complex of $\sfv$;
its fiber  
at $(\cF,\nabla,\cF_B)\in\Op_{G,K}$ is given by
\[T_{(\cF,\nabla,\cF_B)}(\Op_{G,K}/\LS_{G,K})=R\Gamma((\frg/\frb)_{\cF_B}\to(\frg/\ffg{-1})_{\cF_B}\otimes K).\]
The complex $T(\Op_{G,K}/\LS_{G,K})$ is a perfect of $\Tor$-amplitude $[0,2]$. 

\subsection{Plan of proof}
We can now reformulate Theorem~\ref{th:mainbundle} as follows:

\begin{theorem*}[Reformulation of Theorem~\ref{th:mainbundle}] 
The morphism $\sfv:\Op_{G,K}\to\LS_{G,K}$ is surjective.
\end{theorem*}
Its proof relies on the following propositions.

\begin{proposition} \label{prop:closedness}
The image of the map $\sfv:\Op_{G,K}\to\LS_{G,K}$ is specialization-closed.
\end{proposition}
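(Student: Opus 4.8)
The plan is to prove that the image of $\sfv$ is specialization-closed by exhibiting it as the image of a proper morphism onto its scheme-theoretic image, or more directly, by showing that the obstruction to extending an oper structure across a specialization vanishes. Recall that a subset $Z$ of a stack is specialization-closed if, for every discrete valuation ring $R$ with fraction field $L$ and residue field $\ell$, and every map $\spec(R)\to\LS_{G,K}$ whose generic point $\spec(L)$ lands in $Z=\operatorname{im}(\sfv)$, the special point $\spec(\ell)$ also lands in $Z$. So I would start from a family $(\cF_R,\nabla_R)$ of $K$-connections over $\spec(R)$ together with a degenerate oper structure $\cF_{B,L}$ over the generic fiber, and try to extend $\cF_{B,L}$ to an oper structure over the special fiber after possibly modifying it.

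First I would interpret the oper structure as a section of the fibration $\sfv$, which is schematic. The key geometric input is the description of $\sfv$ and its relative tangent complex $T(\Op_{G,K}/\LS_{G,K})=R\Gamma((\frg/\frb)_{\cF_B}\to(\frg/\ffg{-1})_{\cF_B}\otimes K)$ of $\Tor$-amplitude $[0,2]$. The fibers of $\sfv$ are spaces of reductions $\cF_B$ of $\cF$ to $B$ that are compatible with $\nabla$ in the oper sense; a reduction to $B$ is a section of the flag bundle $(G/B)_\cF\to X$, and the oper condition is a closed condition on such sections. Since $G/B$ is projective, the relevant moduli of sections should have a natural properness property. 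Concretely, I would argue that the stack $\Op_{G,K}\to\LS_{G,K}$, while not itself proper because an oper structure over the generic fiber need not extend as a \emph{morphism} into the flag bundle, extends as a \emph{rational} section, and the closure of its graph produces a meromorphic section that is actually a morphism after resolving indeterminacy on a curve.

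The key steps, in order, are: (1) reduce to the valuative criterion over a DVR as above; (2) given the generic oper structure, view it as a rational section $s_L\colon X_L\dashrightarrow (G/B)_{\cF_R}$ extending to a rational section of the flag bundle over the total space $X_R$; (3) use that $X$ is a curve (so $X_R$ is a regular two-dimensional scheme) together with the valuative criterion of properness for the projective fibration $(G/B)_{\cF_R}\to X_R$ to extend $s_L$ to a genuine morphism $s_R$ after blowing up finitely many points of the special fiber, or equivalently over the generic point of the special fiber; (4) check that the oper condition, being closed, is inherited by the extended reduction on the special fiber, so that $(\cF_\ell,\nabla_\ell)$ acquires a degenerate oper structure. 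The heart of (3) is that specialization-closedness, unlike closedness, only requires extending over the generic point of $\spec(\ell)$-fibers, which is much weaker than extending everywhere; this is precisely why we prove specialization-closedness here and defer actual closedness (which presumably requires the later propositions on dimensions and witnesses).

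The main obstacle I expect is step (3): the naive extension of $s_L$ to $s_R$ may fail to be an honest section over the \emph{entire} special curve $X_\ell$, because the closure of the graph of $s_L$ can meet the fiber $(G/B)_{\cF_\ell}$ in a way that projects to all of $X_\ell$ only after excising finitely many points. The resolution is to exploit that specialization-closedness is a statement about the generic point of the special fiber of $\LS_{G,K}$, so I only need the extended reduction to be defined over the generic point of $X_\ell$ (equivalently over a dense open of $X_\ell$), where the indeterminacy locus of a rational section of a projective bundle over a regular surface is automatically empty in codimension one. Thus the properness of $G/B$ combined with the regularity of $X_R$ gives exactly the extension needed, and the closedness of the oper condition finishes the argument.
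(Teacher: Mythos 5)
Your overall strategy is sound and genuinely different from the paper's. The paper does not argue over a DVR at all: it introduces Drinfeld's compactification $\overline{\Bun}_B\to\Bun_G$ (a countable disjoint union of stacks proper over $\Bun_G$), defines a closed substack $\overline{\Op}_{G,K}\subset\LS_{G,K}\times_{\Bun_G}\overline{\Bun}_B$ of ``Drinfeld oper structures'' (where the oper condition is imposed on the open locus where the Pl\"ucker data come from an honest $B$-reduction), and concludes that $\sfv(\Op_{G,K})=\overline\sfv(\overline{\Op}_{G,K})$ is a countable union of closed sets, hence specialization-closed. Your valuative-criterion argument is more elementary and avoids $\overline{\Bun}_B$ entirely; what it gives up is the stronger global statement (countable union of closed substacks, uniform in families) that the compactification provides for free. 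Both routes ultimately rest on the same two inputs: properness of $G/B$ and the fact that the oper condition is closed (the quotient $(\frg/\ffg{-1})_{\cF_B}\otimes K$ is torsion-free, so a section vanishing on a dense open vanishes identically).

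There is, however, one gap you must close, at the end of your step (3). You assert that it suffices to have the extended $B$-reduction defined over the generic point of the special curve $X_\ell$. That is not enough: a point of the image of $\sfv$ over $\ell$ is by definition a pair $(\cF_\ell,\nabla_\ell)$ admitting a $B$-reduction \emph{on all of} $X_\ell$ satisfying the oper condition, so a reduction on a dense open of $X_\ell$ does not yet witness membership in $\sfv(\Op_{G,K})$. The fix is a second application of the same mechanism you already use: since $X_\ell$ is a smooth (hence regular, integral) projective curve over the field $\ell$ and $(G/B)_{\cF_\ell}\to X_\ell$ is proper, the section defined on a dense open of $X_\ell$ extends across the finitely many missing closed points by the valuative criterion at their local rings (which are DVRs), and the oper condition extends by torsion-freeness as above. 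This is precisely the content of the paper's Proposition 3.3(3) (the extension of $B$-structures over a curve over a field), which the paper also needs in order to identify $\overline\sfv(\overline{\Op}_{G,K})$ with $\sfv(\Op_{G,K})$. With that step added, your proof is complete; the blow-up language in step (3) is unnecessary, since regularity of $X\times_\kk\spec(R)$ already forces the indeterminacy locus of the rational section into codimension two.
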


\begin{proposition} \label{prop:good}
Suppose $\deg(K)>\max(2g-2,0)$, and $d\in\pi_1(G)$. Then 
\begin{enumerate}
\item $\LS_{G,K}^d$ is a classical local complete intersection stack;
\item $\LS_{G,K}^d$ is irreducible of dimension $\dim(G)\cdot\deg(K)$; 
\item The morphism $\pi:\LS_{G,K}^d\to\Bun^d_G$ is dominant.
\end{enumerate}
\end{proposition}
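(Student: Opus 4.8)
The plan is to analyze $\LS_{G,K}^d$ through the fibration $\pi:\LS_{G,K}^d\to\Bun_G^d$, exploiting the assumption $\deg(K)>\max(2g-2,0)$ to force a vanishing that makes the stack classical and its fibers large. First I would examine the fiber of $\pi$ over a point $\cF\in\Bun_G^d$. The fiber is the space of $K$-connections on the fixed bundle $\cF$; this is a torsor under $H^0(X,\frg_\cF\otimes K)$ (the space of $K$-valued Higgs fields) whenever it is nonempty, and the obstruction to nonemptiness lives in the failure of $\lambda_K$ to lift. The key computation is the cohomology of the two-term de~Rham complex $\frg_\cF\xrightarrow{\ad(\nabla)}\frg_\cF\otimes K$ whose hypercohomology is $T_{(\cF,\nabla)}\LS_{G,K}[-1]$. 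The obstruction to classicality is $H^1$ of this complex, equivalently (by duality) $H^0$ of the dual. Since $\frg$ is self-dual via the Killing form (or an invariant form on each factor), the dual complex involves $\frg_\cF\otimes K^{-1}\otimes\Omega_X=\frg_\cF\otimes(\Omega_X\otimes K^{-1})$, and $\deg(\Omega_X\otimes K^{-1})=2g-2-\deg(K)<0$ by hypothesis. This negativity is what I expect to kill the relevant $H^0$ and hence the obstruction, yielding part~(1): $\LS_{G,K}^d$ is a classical local complete intersection of the expected dimension $\dim(G)\cdot\deg(K)$.

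For part~(2), irreducibility, the strategy is to show $\pi$ has irreducible fibers over an irreducible base and that it is flat (or at least that the total space is equidimensional and connected). Since $\Bun_G^d$ is smooth and irreducible of dimension $(g-1)\dim(G)$, and the generic fiber of $\pi$ is an affine space (a torsor under $H^0(X,\frg_\cF\otimes K)$, whose dimension I expect to be constant and equal to $\dim(G)\deg(K)+(g-1)\dim(G)$ by Riemann--Roch once $H^1$ vanishes), the total space $\LS_{G,K}^d$ fibers with smooth connected affine fibers over an irreducible base. The cleanest route is to argue that $\pi$ is a torsor/affine bundle over the locus where the relevant $H^1$ vanishes, and that this locus is dense; combined with the equidimensionality from part~(1), this forces irreducibility. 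The dimension count $\dim(G)\deg(K)=\edim\LS_{G,K}$ then follows from the classicality already established.

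Part~(3), dominance of $\pi:\LS_{G,K}^d\to\Bun_G^d$, reduces to showing that a generic (or at least a dense set of) $\cF$ admits a $K$-connection. By the torsor description, $\cF$ admits a $K$-connection precisely when an Atiyah-type obstruction class vanishes; for $K$ with $\lambda_K$ a genuine connection-type datum this is a linear condition, and the surjectivity of $\pi$ onto a dense open should follow from the same vanishing $H^1(X,\frg_\cF\otimes(\Omega_X\otimes K^{-1}))=0$ that drove part~(1), which guarantees the fiber is nonempty of the correct dimension.

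The main obstacle I anticipate is part~(2), the irreducibility: vanishing of $H^1$ and the affine-fibration picture only hold on an open locus of $\Bun_G^d$, and one must rule out extra irreducible components supported on the degeneration locus where the fiber dimension jumps. The delicate point is to combine the local complete intersection property from part~(1), which pins down the dimension of every component to be exactly $\dim(G)\deg(K)$, with the density of the good locus, so that no lower-dimensional jump locus can carry a separate component of full dimension. Handling this uniformly across all of $\Bun_G^d$ — including unstable bundles where $\frg_\cF$ can have large $H^0$ — is where the real care is required; I would likely stratify $\Bun_G^d$ by instability type (the Shatz stratification) and control the fiber dimension on each stratum to confirm that only the generic stratum contributes a component of the expected dimension.
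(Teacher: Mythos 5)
Your overall architecture (analyzing the fibers of $\pi$, Riemann--Roch, Serre duality, and ultimately the Shatz stratification) matches the paper's, but the load-bearing step in your part~(1) is wrong as stated, and the way you defer the hard work to part~(2) creates a circularity. You claim that $\deg(\Omega_X\otimes K^{-1})<0$ kills $H^0(X,\frg_\cF\otimes K^\vee\otimes\Omega_X)$ and hence the obstruction space at every point. This fails for unstable bundles: if $\frg_\cF$ has a subbundle of large positive degree (e.g.\ $G=\GL(2)$ and $\cF=\cO_X(n)\oplus\cO_X$ with $n\gg0$), then $H^0(X,\frg_\cF\otimes K^\vee\otimes\Omega_X)\neq0$ no matter how positive $K$ is. Consequently the degree-one cohomology of the tangent complex does not vanish pointwise, and classicality cannot be read off fiberwise; it is a global dimension statement. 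The paper's actual mechanism is the codimension estimate for the loci $\cH_k=\{\cF:\dim H^1(X,\frg_\cF\otimes K)=k\}$: one shows $\codim\cH_k\ge k$ (resp.\ $>k$), which bounds $\dim\pi^{-1}(\cH_k)$ by $\deg(K)\dim(G)+k-\codim\cH_k$ and hence forces every component of the classical truncation to have dimension at most the expected one; quasi-smoothness supplies the opposite inequality, and that equality is what classicality and LCI mean here. So the stratification is needed already for part~(1), not only to rule out extra components in part~(2) --- and since your part~(2) is supposed to rest on the LCI property from part~(1), you cannot postpone it to the end as an anticipated ``obstacle.''

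The second gap is quantitative: even granting the stratification strategy, you give no estimate showing that the unstable strata contribute codimension strictly larger than the cohomology jump $k$. The paper does this by passing to $K'=K^\vee\otimes\Omega_X$, observing that $H^0(X,\frg_\cF\otimes K')=H^0(X,\fru_\cF\otimes K')$ via the Harder--Narasimhan filtration (only the nilpotent radical of the destabilizing parabolic contributes), bounding $\dim H^0$ of a semistable bundle of slope at least $-1$ by $\deg+\rk$, and comparing with the explicit formula $\codim\Shatz_P^d=\sum_{\phi}\left(\langle d,\sigma(\phi)\rangle+(g-1)\right)$. Note also that the genus enters nontrivially: for $g=1$ the naive bound gives only a non-strict inequality and the paper invokes the transitive action of the automorphism group of the elliptic curve on $\Pic^k(X)$ for $k\ne0$, while $g=0$ requires a separate explicit computation of $\fru_\cF$. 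None of this quantitative work appears in your sketch, so as written the proposal identifies the right objects but does not yet constitute a proof of any of the three assertions.
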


\begin{proposition} \label{prop:witness}
Suppose $\lambda_K:\Omega_X\to K$ is equal to zero, $d\in\pi_1(G)$, and $H^0(X,K)\ne 0$. 
Then there exists a degenerate oper $(\cF,A,\cF_B)\in\Op_{G,K}$ such that
\begin{enumerate}
\item $\deg\cF=d$;
\item The complex
\[T_{(\cF,A,\cF_B)}(\Op_{G,K}/\LS_{G,K})=R\Gamma((\frg/\frb)_{\cF_B}\overset{\ad(A)}\to
(\frg/\ffg{-1})_{\cF_B}\otimes K)\]
has cohomology in degree $0$ only.
\end{enumerate}
\end{proposition}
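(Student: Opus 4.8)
The plan is to exhibit an explicit \emph{regular} Higgs oper. First I would choose the $B$-reduction so as to make its degrees as negative as needed. Since $\lambda_K=0$, the Cartan bundle $\frt_{\cF_B}$ is trivial for any reduction to $T$, so the hypothesis $H^0(X,K)\neq 0$ is exactly what lets me insert a nonzero regular semisimple Cartan term. Concretely, fix $\delta\in\pi_1(T)=\Hom(\gm,T)$ lifting $d$ under $\pi_1(T)\to\pi_1(G)$ and lying deep in the anti-dominant chamber, i.e.\ $\langle\alpha,\delta\rangle\ll 0$ for every simple $\alpha$; such a lift exists because the fibre of $\pi_1(T)\to\pi_1(G)$ is a coset of the coroot lattice, and subtracting a large multiple of $\sum_{\alpha\in S}\check\alpha$ drives $\langle\alpha,\delta\rangle$ to $-\infty$. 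Let $\cF_B$ come from a $T$-bundle of degree $\delta$, so that $(\frg_\beta)_{\cF_B}$ has degree $\langle\beta,\delta\rangle$. Fixing a regular $h\in\frt$, a nonzero $t\in H^0(X,K)$, and general $s_\alpha\in H^0(X,(\frg_{-\alpha})_{\cF_B}\otimes K)$ (which exist and are abundant, the bundles being of large degree), I set $A:=h\otimes t+\sum_{\alpha\in S}f_\alpha\otimes s_\alpha\in H^0(X,(\ffg{-1})_{\cF_B}\otimes K)$, a degenerate oper with $\deg\cF=d$.

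Next I would verify that $A(x)$ is regular and that $\ad(A(x))\colon\frg/\frb\to\frg/\ffg{-1}$ is surjective for every $x\in X$. Off the finite zero locus of $t$ the semisimple part $h\otimes t(x)$ dominates, so (for general $h,s_\alpha$) $A(x)$ is regular with centralizer a Cartan, which meets $\ffg{2}$ only in $0$; at the zeros of $t$, generic $s_\alpha$ keep every simple component nonzero, so $A(x)$ is a principal nilpotent, whose centralizer lies in heights $\le 0$ and again misses $\ffg{2}$. As surjectivity of $\ad(A(x))\colon\frg/\frb\to\frg/\ffg{-1}$ is dual, via the invariant form, to injectivity of $\ad(A(x))\colon\ffg{2}\to\frn$, and the latter is precisely the statement that the centralizer misses $\ffg{2}$, I conclude that $\ad(A)\colon(\frg/\frb)_{\cF_B}\to(\frg/\ffg{-1})_{\cF_B}\otimes K$ is a surjection of vector bundles, with kernel $\mathcal{K}$ a bundle of rank $|S|$. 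The relative tangent complex of Proposition~\ref{prop:witness}(2) is then quasi-isomorphic to $\mathcal{K}$ placed in degree $0$.

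It remains to prove $H^1(X,\mathcal{K})=0$. Here anti-dominance arranges the outer terms favourably: every line-bundle summand of $(\frg/\frb)_{\cF_B}$ and of $(\frg/\ffg{-1})_{\cF_B}\otimes K$ has degree $>2g-2$, so both have vanishing $H^1$. From the short exact sequence $0\to\mathcal{K}\to(\frg/\frb)_{\cF_B}\to(\frg/\ffg{-1})_{\cF_B}\otimes K\to 0$ this gives $H^2(X,\mathcal{K})=0$ automatically and $H^1(X,\mathcal{K})=\operatorname{coker}\bigl(H^0(\ad A)\bigr)$; thus condition (2) is equivalent to the surjectivity of $\ad(A)$ on global sections.

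The hard part will be this global surjectivity. A topological computation yields $\deg\mathcal{K}=-\sum_{\alpha\in S}\langle\alpha,\delta\rangle-(|\Phi^+|-|S|)\deg K$, which becomes arbitrarily large once $\delta$ is pushed far into the anti-dominant cone, so the average slope of $\mathcal{K}$ exceeds $2g-2$. The genuine obstacle is that a subbundle of a sum of high-degree line bundles may still contain a low-degree sub-line-bundle, so positivity of $\deg\mathcal{K}$ does not by itself force $H^1(X,\mathcal{K})=0$; I must bound $\mu_{\min}(\mathcal{K})$ from below. I expect to achieve this by analysing $\mathcal{K}=\ker(\ad A)$ through the principal $\mathfrak{sl}_2$: filtering $(\frg/\frb)_{\cF_B}$ by height should identify the associated graded of $\mathcal{K}$ with line bundles indexed by the exponents $m_i$, whose degrees are once more governed by $\langle\,\cdot\,,\delta\rangle$ and exceed $2g-2$ in the anti-dominant limit. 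Making this identification precise, and ruling out any destabilising sub-line-bundle of $\mathcal{K}$, is where the regularity of $A$—the fact that its centralizer stays transverse to the flag everywhere—must be used essentially, and is the step I anticipate to be most delicate.
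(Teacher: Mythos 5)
Your construction of the Higgs field and the pointwise analysis are sound and essentially reproduce the content of the paper's Lemma~\ref{lm:adsur}: the paper likewise takes $A=A_0+\sum_{\alpha\in S}A_{-\alpha}$ with a Cartan term that is generically regular and simple-root terms that do not vanish where the Cartan term degenerates, and proves fibrewise surjectivity of $\ad(A):(\frg/\frb)_{\cF_B}\to(\frg/\ffg{-1})_{\cF_B}\otimes K$ (by a filtration argument rather than your duality with $\ffg{2}\to\frn$, but the two are interchangeable). The reduction of condition (2) to $H^1(X,\mathcal{K})=0$ for the kernel bundle $\mathcal{K}$ is also correct. The problem is that the last step --- the actual vanishing of $H^1(X,\mathcal{K})$ --- is exactly the hard point, you do not prove it, and the route you sketch for it does not obviously work. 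Positivity of $\deg\mathcal{K}$ proves nothing, as you note; and the proposed identification of $\operatorname{gr}\mathcal{K}$ with line bundles indexed by the exponents runs into the difficulty that the Cartan term $h\otimes t$ \emph{preserves} the height filtration of $\frg/\frb$, so on the associated graded of that filtration only the nilpotent part $\sum f_\alpha\otimes s_\alpha$ survives, and it degenerates at the zeroes of the $s_\alpha$ (which are unavoidable: the relevant line bundles have large positive degree). Hence the graded kernel is not a subbundle of controlled degree, and no lower bound on $\mu_{\min}(\mathcal{K})$ falls out. Conversely, away from the zeroes of $t$ the element $A(x)$ is regular semisimple and the kernel is governed by a moving Cartan, not by the principal $\mathfrak{sl}_2$. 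So there is a genuine gap, concentrated precisely where you anticipated it.

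The paper closes this gap not by estimating $\mathcal{K}$ but by a two-step construction that makes the estimate unnecessary. First it builds the oper Higgs field on \emph{some} $T$-bundle $\cF'_T$ (with no control on the degree), obtaining a kernel bundle $E'$ about which nothing is claimed beyond its existence; by Serre vanishing one may choose $N$ with $H^1(X,E'(Nx))=0$ for a point $x$ where $A_0$ is regular. Second, it performs a Hecke modification of $\cF'_T$ at $x$ by a cocharacter $\gamma$ with $\langle\gamma,\alpha\rangle\le -N$ for all $\alpha\in S$, chosen so that the new degree lands on the prescribed class $d$. Since $A$ lies in $(\frb^-)_{\cF'_T}\otimes K$, it transfers to a Higgs field on the modified bundle, fibrewise surjectivity persists, and the key point is that the modification enlarges the kernel: the induced map $E'(Nx)\to E$ is an injection of rank-$|S|$ bundles with torsion cokernel, so $H^1(X,E)=0$ follows from $H^1(X,E'(Nx))=0$. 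In effect, the anti-dominant twist that you try to impose from the outset is instead applied \emph{after} the kernel has been fixed, which is what converts ``the kernel has large degree'' into the usable statement ``the kernel contains a fixed bundle twisted by $\cO_X(Nx)$.'' If you want to salvage your one-step approach, you would need to prove a uniform lower bound on $\mu_{\min}(\mathcal{K})$ as $\delta$ moves into the anti-dominant cone; the paper's detour is the standard way to avoid having to do so.
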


\begin{remarks*} 
The second condition of Proposition~\ref{prop:witness} is equivalent to smoothness of the morphism $\sfv$ at the
point $(\cF,A,\cF_B)\in\Op_{G,K}$. (It is important here that we work with dg stacks.)

Also, in Proposition~\ref{prop:witness}, $\lambda_K=0$, so that $K$-connections
are actually ($K$-valued) Higgs bundles. We use the letter $A$ instead of $\nabla$ to distinguish Higgs fields from more
general $K$-connections. 
\end{remarks*} 

The proofs of Proposition~\ref{prop:closedness}, \ref{prop:good}, and \ref{prop:witness} occupy Sections~\ref{sc:closedness}, \ref{sc:good} and \ref{sc:witness}, respectively. It is easy to see that the propositions imply Theorem~\ref{th:mainbundle}.

\begin{proof}[Proof of Theorem~\ref{th:mainbundle}] Let us show that the morphism $\sfv:\Op_{G,K}\to\LS_{G,K}$ is surjective. It suffices to prove the claim for sufficiently positive $K$, so we may assume that
$\deg(K)>\max(0,2g-2)$. This implies $H^0(X,K)\ne 0$, so $K$ satisfies the hypotheses of Propositions~\ref{prop:good} and \ref{prop:witness}.

By Proposition~\ref{prop:closedness}, it suffices to show $\sfv(\Op_{G,K})$ 
contains a dense open set (that is, that its complement is nowhere dense). By Proposition~\ref{prop:good}, it suffices to check that for every $d\in\pi_1(G)$, the intersection
\[\sfv(\Op_{G,K})\cap\LS_{G,K}^d\]
has non-empty interior. Since smooth morphisms are open, it is enough to check the following condition:
\begin{itemize}
\item[($\dagger$)] For every $d\in\pi_1(G)$, there exists 
a degenerate oper $(\cF,\nabla,\cF_B)\in\Op_{G,K}$ such that $\sfv$ is smooth at $(\cF,\nabla,\cF_B)$ and
$\deg(\cF)=d$. 
\end{itemize}

Let us now allow the map $\lambda_K:\Omega_X\to K$ to vary in the vector space $H^0(X,K\otimes\Omega_X^{-1})$.
Put \[\Lambda_\dagger:=\{\lambda_K\in H^0(X,K\otimes\Omega_X^{-1}):(\dagger)\text{ holds}\}.\]
It is easy to see that $\Lambda_\dagger$ is an open conical subset. By Proposition~\ref{prop:witness}, $0\in\Lambda_\dagger$, and therefore $\Lambda_\dagger=H^0(X,K\otimes\Omega_X^{-1})$, as required.
\end{proof}

\section{Drinfeld's structures and opers}\label{sc:closedness}

Propositions~\ref{prop:closedness} easily follows from looking at Drinfeld's compactification of 
the morphism $\Bun_B\to\Bun_G$. Let us sketch the argument.

\subsection{Drinfeld's structures on $G$-bundles}
Recall the definition of Drinfeld's compactification of the morphism $\Bun_B\to\Bun_G$; we follow Section~1 of 
\cite{BG}. As before, $N\subset B$ is the unipotent radical, so that $T=B/N$ is the Cartan group.

\begin{definition} Let $S$ be a scheme. By definition, an \emph{$S$-family of Drinfeld's structures} is a triple
$(\cF,\cF_T,\kappa)$, where 
\begin{itemize}
\item $\cF$ is a $G$-bundle on $S\times X$;

\item $\cF_T$ is a $T$-bundle on $S\times X$;

\item $\kappa$ is a collection of morphisms 
\[\kappa_V:(V^N)_{\cF_T}\to V_\cF\]
for all finite-dimensional representations $V$ of $G$.
\end{itemize}
satisfying the following two conditions:
\begin{enumerate}
\item For any $V$, the zero locus of $\kappa_V$ is finite over $S$;

\item The maps $\kappa_V$ satisfy the Pl\"ucker relations: for the trivial representation $V$, $\kappa_V$ is the identity, and $\kappa_V$ is compatible with morphisms of representations $V_1\otimes V_2\to V$.
\end{enumerate}
\end{definition}

Let $\overline{\Bun}_B$ be the stack whose category of $S$-points is the category of $S$-families of Drinfeld's structures. The stack $\overline{\Bun}_B$ is equipped with the natural map
\[\overline{\mathsf{p}}:\overline{\Bun}_B\to\Bun_G:(\cF,\cF_T,\kappa)\mapsto\cF.\]

\begin{proposition}[{\cite[Proposition~1.2.2]{BG}}] \label{prop:compactification}
$\overline{\Bun}_B$ is an algebraic stack. The map $\overline{\mathsf{p}}$ is representable,
and $\overline{\Bun}_B$ is a countable disjoint union of stacks that are proper over $\Bun_G$. \qed
\end{proposition}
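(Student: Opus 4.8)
The plan is to realize $\overline{\Bun}_B$ explicitly as a moduli space built out of relative Quot schemes over $\Bun_G$, following the Plücker picture of the flag variety. The first step is to cut the a priori infinite datum $\kappa=(\kappa_V)$ down to a finite one. I would fix a finite collection of irreducible representations $V_1,\dots,V_r$ of $G$, with highest weights $\lambda_1,\dots,\lambda_r$, chosen so that (i) the $\lambda_i$ generate the character lattice $\Hom(T,\gm)$ and (ii) the corresponding lines realize the Plücker embedding $G/B\hookrightarrow\prod_i\mathbb{P}(V_i)$. Since each $V_i$ is irreducible, $\dim V_i^N=1$, so $(V_i^N)_{\cF_T}$ is the line bundle $L_{\lambda_i}$ attached to $\cF_T$ and the character $\lambda_i$, and $\kappa_{V_i}$ is simply a map of a line bundle into a vector bundle, $L_{\lambda_i}\to (V_i)_\cF$. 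The key thing to verify here is that the Plücker relations reconstruct the entire system $\kappa$ uniquely from $(\kappa_{V_1},\dots,\kappa_{V_r})$, and that the finiteness-of-zero-locus condition for all $V$ follows from its counterpart for the generators; this bookkeeping is the conceptual core of the reduction.

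The second step rests on the observation that, since $X$ is a smooth curve, a nonzero map from a line bundle $L$ to a vector bundle $(V_i)_\cF$ is the same datum as a rank-one subsheaf of $(V_i)_\cF$: the image is an invertible subsheaf isomorphic to $L$, and the locus where the inclusion fails to be a subbundle — which is exactly the zero locus of $\kappa_{V_i}$ — is automatically finite. Such subsheaves are parametrized by the relative Quot scheme of $(V_i)_\cF$ over $\Bun_G$, which is representable and proper over $\Bun_G$ and splits into a countable disjoint union indexed by the degree of the subsheaf. Forming the fibre product over $i$, and then imposing the Plücker relations (a closed condition) together with the compatibility that makes the collection $\{L_{\lambda_i}\}$ descend to a single $T$-bundle $\cF_T$ (again closed, using that the $\lambda_i$ generate $\Hom(T,\gm)$), cuts out $\overline{\Bun}_B$ as a closed substack of a product of Quot schemes over $\Bun_G$. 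This already establishes that $\overline{\Bun}_B$ is algebraic.

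Representability of $\overline{\mathsf{p}}$ then reduces to the statement that a Drinfeld structure has no nontrivial automorphisms over a fixed $\cF$: an automorphism of $\cF_T$ acts on $L_{\lambda_i}$ through $\lambda_i$, so if it fixes every $\kappa_{V_i}$ (whose target $(V_i)_\cF$ is held fixed) it must be annihilated by all $\lambda_i$, hence be trivial because the $\lambda_i$ generate $\Hom(T,\gm)$; the relative inertia thus vanishes, and the Quot-scheme description upgrades representability by algebraic spaces to representability by schemes. Properness of each component I would check by the valuative criterion: over the spectrum of a discrete valuation ring, a Drinfeld structure at the generic point — a family of rank-one subsheaves satisfying Plücker — extends uniquely to the closed point, which is precisely properness of the Quot schemes combined with the fact that the Plücker relations and the $T$-bundle compatibility are closed conditions stable under specialization. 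Fixing the degrees (equivalently $\deg\cF_T\in\pi_1(T)$) isolates the proper components, and letting them range over the lattice yields the countable disjoint union. I expect the genuinely delicate points to be the reduction of the first paragraph — that the finite Plücker data faithfully encodes $\kappa$ together with all of its finiteness conditions — and the existence half of the valuative criterion, namely that a degenerating $B$-reduction always acquires a limiting Drinfeld structure rather than leaving the moduli problem.
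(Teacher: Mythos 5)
The paper does not actually prove this proposition: it is quoted verbatim from Braverman--Gaitsgory \cite[Proposition~1.2.2]{BG}, and the \textup{qed} symbol defers the proof entirely to that reference. So there is no internal proof to compare against; judged against the cited source, your sketch reconstructs essentially the argument given there — cut $\kappa=(\kappa_V)$ down to finitely many highest-weight representations via the Pl\"ucker relations, identify the resulting maps of line bundles with rank-one subsheaves parametrized by relative Quot schemes over $\Bun_G$, and deduce algebraicity, representability of $\overline{\mathsf{p}}$, and componentwise properness from representability and properness of Quot. Your inertia argument for representability (an automorphism of $\cF_T$ fixing every $\kappa_{V_i}$ is killed by all $\lambda_i$, hence trivial) is also the standard one.

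Two refinements are worth recording. First, your condition (i) is the wrong hypothesis for the reconstruction step: to recover $\kappa_{V_\mu}$ for an arbitrary irreducible $V_\mu$ from the generators you need surjections $V_{\lambda_{i_1}}\otimes\cdots\otimes V_{\lambda_{i_m}}\twoheadrightarrow V_\mu$, i.e.\ you need $\mu$ to be a \emph{nonnegative} integral combination of the $\lambda_i$; so the $\lambda_i$ should generate the monoid of dominant weights (finitely generated, by Gordan's lemma — this is also how one handles the case where $[G,G]$ is not simply connected and fundamental weights need not be characters of $T$). Generation of the lattice $\Hom(T,\gm)$ is what the inertia argument uses; the two conditions are different, though both can be arranged at once. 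Second, the ``existence half of the valuative criterion'' that you flag as delicate in fact goes through painlessly, and is exactly where the Quot formulation earns its keep: over a discrete valuation ring, pass to the limit in the Quot scheme of quotients with fixed Hilbert polynomial; the kernel on the special fibre is a rank-one torsion-free sheaf on the smooth curve $X$, hence invertible, so the limit is again a map from a line bundle whose vanishing locus is finite (of bounded degree), and the Pl\"ucker relations together with the identifications forcing the $L_{\lambda_i}$ to arise from a single $T$-bundle are closed conditions preserved under specialization. Fixing $\deg\cF_T\in\pi_1(T)$, equivalently all the $\deg L_{\lambda_i}$, then isolates the pieces proper over $\Bun_G$ and yields the countable disjoint union.
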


\begin{proposition} \label{prop:nonsingBstruct}
Let $(\cF,\cF_T,\kappa)$ be an $S$-family of Drinfeld's structures.
\begin{enumerate}
\item\label{eq:nonsing} 
There is an open subset $U\subset S\times X$ such that the intersection $U\cap \{s\}\times X$ is non-empty
for any point $s\in S$ and that $\kappa^V$ has no zeroes on $U$ for any $V$;
\item Over such an open set $U$, the triple $(\cF,\cF_T,\kappa)$ is induced by a $B$-bundle $\cF'_B$.
This means that $\cF|_U=G_{\cF'_B}$ and $(\cF_T)|_U=T_{\cF'_B}$ are the induced bundles, and that for any 
$V$, $(\kappa^V)|_U$ is the natural map.
\item\label{eq:nonsingfield} Suppose $S$ is the spectrum of a field. Then the $B$-structure $\cF'_B$ on $\cF|_U$
can be extended to a $B$-structure $\cF_B$ on the $\cF$ over the entire $S\times X$. 
\end{enumerate}
\end{proposition}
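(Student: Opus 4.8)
The plan is to prove the three parts in the order they are stated, treating part~(1) as a local statement about the divisor on which the Pl\"ucker sections $\kappa_V$ degenerate, part~(2) as an identification of Drinfeld's structures with honest $B$-bundles away from that divisor, and part~(3) as an extension result that is special to the case when $S=\spec(\F)$ is a field, so that $S\times X$ is a smooth surface (or, for the generic point, a curve over $\F$). First I would set up (1) as follows. By the first axiom in the definition of Drinfeld's structures, the zero locus $Z_V\subset S\times X$ of each $\kappa_V$ is finite over $S$; in particular, for every point $s\in S$ the fiber $Z_V\cap(\{s\}\times X)$ is a proper closed (hence finite, possibly empty) subset of $\{s\}\times X$. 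I only need to control finitely many $V$: by the Pl\"ucker compatibility, all $\kappa_V$ are determined by the $\kappa_{V_\omega}$ for $V_\omega$ ranging over the fundamental representations (a finite set), so I take $U:=(S\times X)\setminus\bigcup_\omega Z_{V_\omega}$. Since each $Z_{V_\omega}$ is finite over $S$, its complement meets every fiber $\{s\}\times X$ in a non-empty open set, which is exactly the required property; and on $U$ no $\kappa_V$ vanishes because every $\kappa_V$ factors through (tensor products of) the $\kappa_{V_\omega}$.

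Next, for (2), I would invoke the standard dictionary between non-degenerate Pl\"ucker data and reductions to $B$. Over $U$ all the $\kappa_V$ are nowhere-vanishing bundle maps $(V^N)_{\cF_T}\to V_\cF$ satisfying the Pl\"ucker relations, which is precisely the data of a $B$-reduction $\cF'_B$ of $\cF|_U$ together with an identification of its induced $T$-bundle with $\cF_T|_U$: the line subbundles $\kappa_{V_\omega}$ cut out a full flag, i.e.\ a section of $(G/B)_{\cF|_U}$, and the Pl\"ucker relations guarantee these flags are compatible and define a genuine point of $(G/B)_\cF$. This is exactly the open part of Drinfeld's picture where $\overline{\Bun}_B$ agrees with $\Bun_B$, so I would simply cite this fact from \cite{BG} rather than re-derive it, and record that $\cF|_U=G_{\cF'_B}$, $(\cF_T)|_U=T_{\cF'_B}$, with the $\kappa_V|_U$ the tautological maps.

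The substantive part is (3), and this is where the hypothesis $S=\spec(\kk')$ (a field) is essential. Here $S\times X=X_{\kk'}$ is a smooth curve over the field $\kk'$, and $U\subset X_{\kk'}$ is the complement of finitely many closed points; the $B$-structure $\cF'_B$ is a section of the smooth projective fibration $(G/B)_\cF\to X_{\kk'}$ defined over the open curve $U$. I would extend it using the valuative criterion of properness: $G/B$ is projective, so $(G/B)_\cF\to X_{\kk'}$ is proper, and a section over $U=X_{\kk'}\setminus\{x_1,\dots,x_r\}$ extends uniquely across each removed point because $X_{\kk'}$ is a smooth curve (each local ring $\cO_{X_{\kk'},x_i}$ is a DVR, so the map $\spec(\cO_{X_{\kk'},x_i})\setminus\{x_i\}\to(G/B)_\cF$ extends over the closed point). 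Gluing the local extensions with the section on $U$ gives a section over all of $X_{\kk'}$, i.e.\ a $B$-structure $\cF_B$ on $\cF$ restricting to $\cF'_B$ on $U$. The main obstacle to watch is that I must genuinely use smoothness of $X$ (hence normality of $X_{\kk'}$, giving DVRs at codimension-one points) for the valuative-criterion extension to apply — the statement is false for higher-dimensional $S$, which is why (3) is restricted to the field case. I would close by noting that this extended $\cF_B$ is the output we need, and that its agreement with the Drinfeld data is automatic on the dense open $U$ by construction.
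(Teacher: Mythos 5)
Your argument is correct and is essentially the argument the paper delegates to \cite{BG} (the proof there is just the citation ``this is a version of \cite[Proposition~1.25]{BG}, which can be proved in the same way''): reduce to a finite generating set of dominant weights using the Pl\"ucker relations and the finiteness of the zero loci over $S$ for part~(1), invoke the standard dictionary between nowhere-vanishing Pl\"ucker data and $B$-reductions for part~(2), and extend the resulting section of the proper fibration $(G/B)_\cF\to S\times X$ across the finitely many missing closed points of the regular one-dimensional scheme $S\times X$ via the valuative criterion for part~(3). Your emphasis on where the field hypothesis enters (DVR local rings at the removed points) is exactly the right point, and matches the intended proof.
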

\begin{proof} This is a version of \cite[Proposition~1.25]{BG}, which can be proved in the same way.
\end{proof}

\subsection{Drinfeld's oper structures}
We can now modify the notion of degenerate oper by replacing $B$-structures with Drinfeld's structures. 
Consider the stack
\[\LS_{G,K}\times_{\Bun_G}\overline{\Bun}_B.\] 
Given a scheme $S$, the category $\left(\LS_{G,K}\times_{\Bun_G}\overline{\Bun}_B\right)(S)$ is the 
category of collections $(\cF,\nabla,\cF_T,\kappa)$, where $(\cF,\nabla)\in\LS_{G,K}(S)$ is an $S$-family
of $K$-connections, and $(\cF,\cF_T,\kappa)$ is an $S$-family of Drinfeld's structures.

Given $(\cF,\nabla,\cF_T,\kappa)\in\left(\LS_{G,K}\times_{\Bun_G}\overline{\Bun}_B\right)(S)$, we apply
Proposition~\ref{prop:nonsingBstruct} to $(\cF,\cF_T,\kappa)$. We thus obtain an open subset $U\subset S\times X$
and a $B$-structure $\cF'_B$ on $\cF|_U$. We say that $(\cF,\nabla,\cF_T,\kappa)$ is an \emph{$S$-family of Drinfeld's oper
structures} (or, more precisely, degenerate oper structures) if the $B$-structure $\cF'_B$ satisfies 
Definition~\ref{def:oper} on $U$. It is easy to see that the condition does not depend on the choice of an open
set $U$ satisfying Proposition~\ref{prop:nonsingBstruct}\eqref{eq:nonsing}.

Denote by 
\[\overline{\Op}_{G,K}\subset \LS_{G,K}\times_{\Bun_G}\overline{\Bun}_B\] 
the stack parametrizing Drinfeld's oper structures. Its properties are summarized in the following easy proposition:

\begin{proposition}\label{prop:Dopers}
\begin{enumerate}
\item\label{eq:opers closed} $\overline{\Op}_{G,K}\subset \LS_{G,K}\times_{\Bun_G}\overline{\Bun}_B$ 
is a closed substack.

\item\label{eq:equal images}
 Consider the morphism 
\[\overline\sfv:\overline{\Op}_{G,K}\to\LS_{G,K}:(\cF,\nabla,\cF_T,\kappa)\mapsto(\cF,\nabla).\]
Then $\overline\sfv(\overline{\Op}_{G,K})=\sfv(\Op_{G,K})$. (Note that this is an equality between sets of points;
we do not consider any kind of algebraic structure on the images.)  
\end{enumerate}
\end{proposition}
\begin{proof}
\eqref{eq:opers closed} Without loss of generality, we may assume that the set $U\subset S\times X$ is the complement
of a divisor (see \cite[Lemma~3.2.7]{Bar}); in this case, the claim is easy.

\eqref{eq:equal images} Since $\sfv$ decomposes as
\[\Op_{G,K}\hookrightarrow\overline\Op_{G,K}\overset{\overline\sfv}\to\LS_{G,K},\]
the inclusion $\sfv(\Op_{G,K})\subset\overline\sfv(\overline\Op_{G,K})$ is clear. In the other direction,
let $S$ be the spectrum of a field, and suppose 
\[(\cF,\nabla,\cF_T,\kappa)\in\overline\Op_{G,K}(S),\]
so that $(\cF,\nabla)\in\overline\sfv(\overline\Op_{G,K})$. Then Proposition~\ref{prop:nonsingBstruct}\eqref{eq:nonsingfield} provides a $B$-structure $\cF_B$ on $\cF$, and it is
clear that $(\cF,\nabla,\cF_B)\in\Op_{G,K}(S)$. Therefore,
$(\cF,\nabla)\in\sfv(\overline\Op_{G,K})$, as required.
\end{proof}

\subsection{Proof of Proposition~\ref{prop:closedness}} We are now ready to prove the proposition.

\begin{proof}
By Proposition~\ref{prop:Dopers}\eqref{eq:equal images}, we need to show that the image $\overline\sfv(\overline\Op_{G,K})\subset\LS_{G,K}$ is specialization-closed. It follows from Proposition~\ref{prop:compactification} and Proposition~\ref{prop:Dopers}\eqref{eq:opers closed} that $\overline\Op_{G,K}$ is a countable disjoint union of stacks that are proper over $\LS_{G,K}$, and therefore
$\overline\sfv(\overline\Op_{G,K})$ is a countable union of closed sets.
\end{proof}

\section{Components of the stack of $K$-connections}\label{sc:good}

In this section, we prove Proposition~\ref{prop:good}. 

\subsection{Summary of results}
The line bundle $K$ defines the following locally closed subsets of $\Bun_G$:
\[
\cH_k:=\{\cF\in\Bun_G:\dim H^1(X,\frg_\cF\otimes K)=k\}\subset\Bun_G\qquad (k\ge 0).
\]

Consider the following two conditions on the triple $(X,G,K)$:

\begin{align}
\codim\cH_k\ge k&\qquad\text{for any }k>0\label{cn:goodline}\\
\codim\cH_k>k&\qquad\text{for any }k>0\label{cn:vgoodline}.
\end{align}

\begin{remark} The conditions \eqref{cn:vgoodline} and \eqref{cn:goodline}
are monotone: if the triple $(X,G,K)$  satisfies \eqref{cn:vgoodline} (resp. \eqref{cn:goodline}),
then so does $(X,G,K(D))$ for any divisor $D\ge0$.
\end{remark}

\begin{proposition}\label{prop:whatisgood}
\begin{enumerate}
\item If the condition \eqref{cn:goodline} is satisfied, $\LS_{G,K}$ is a classical stack; it is a 
local complete intersection of pure dimension \[\dim\LS_{G,K}=\deg(K)\cdot\dim(G).\]

\item If the stronger condition \eqref{cn:vgoodline} is satisfied, $\pi$ is dominant: for every $d\in\pi_1(G)$,
$\LS_{G,K}$ is irreducible and the map $\pi:\LS^d_{G,K}\to\Bun_G^d$ is dominant.
\end{enumerate}
\end{proposition}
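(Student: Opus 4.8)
# Proof Proposal for Proposition~\ref{prop:whatisgood}

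The plan is to analyze the fibers of $\pi:\LS_{G,K}\to\Bun_G$ stratified by the loci $\cH_k$, using the deformation theory already recorded via the tangent complex. First I would observe that for a fixed $G$-bundle $\cF$, the fiber $\pi^{-1}(\cF)$ is the space of $K$-connections on $\cF$. This is either empty or a torsor under the affine space $H^0(X,\frg_\cF\otimes K)$; the obstruction to its non-emptiness lies in $H^1(X,\frg_\cF\otimes K)$, whose dimension is exactly the invariant $k$ defining $\cH_k$. By Riemann--Roch, $\chi(\frg_\cF\otimes K)=\dim(G)\cdot\deg(K)$ (the Euler characteristic is independent of $\cF$), so on the stratum $\cH_k$ the fiber, when non-empty, has dimension $h^0(\frg_\cF\otimes K)=\dim(G)\cdot\deg(K)+k$.

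For part (1), the key computation is a dimension count over each stratum. The preimage $\pi^{-1}(\cH_k)$ has dimension at most
\[
\dim\cH_k+\dim(G)\cdot\deg(K)+k.
\]
Writing $\dim\cH_k=\dim\Bun_G-\codim\cH_k$ and using $\dim\Bun_G=(g-1)\dim(G)$, condition \eqref{cn:goodline} (that $\codim\cH_k\ge k$) gives that each stratum with $k>0$ contributes at most $(g-1)\dim(G)+\dim(G)\deg(K)$, while recalling that the expected dimension of $\LS_{G,K}$ is $\dim(G)\cdot\deg(K)$ and $\Bun_G$ has dimension $(g-1)\dim(G)$. The point is that the $k=0$ stratum (the open locus where $H^1$ vanishes, so $\pi$ is a smooth affine-space fibration) already has dimension equal to the expected dimension $\dim(G)\cdot\deg(K)$, and condition \eqref{cn:goodline} ensures no higher stratum exceeds this. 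Since $\LS_{G,K}$ is quasi-smooth of expected dimension $\dim(G)\cdot\deg(K)$ with tangent complex of $\Tor$-amplitude $[-1,1]$, a dg-stack whose actual dimension everywhere equals its expected dimension is a classical local complete intersection; this gives the local complete intersection statement and purity of dimension.

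For part (2), I would exploit the strict inequality in \eqref{cn:vgoodline}. The same bookkeeping now shows that $\pi^{-1}(\cH_k)$ for every $k>0$ has dimension strictly less than $\dim(G)\cdot\deg(K)$, hence lies in the closure of no top-dimensional component and in fact is a proper closed subset of each $\LS_{G,K}^d$. Therefore the open stratum $\pi^{-1}(\cH_0)$ is dense in $\LS_{G,K}^d$. Over $\cH_0$ the map $\pi$ is a genuine affine-space fibration (a torsor under the vector bundle whose fiber is $H^0(\frg_\cF\otimes K)$), so its image contains the open dense set $\cH_0\cap\Bun_G^d$; this yields dominance of $\pi:\LS_{G,K}^d\to\Bun_G^d$. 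Irreducibility of $\LS_{G,K}^d$ follows because $\Bun_G^d$ is irreducible (as $\Bun_G$ is smooth with connected components indexed by $\pi_1(G)$), the open dense fibration $\pi^{-1}(\cH_0)\to\cH_0\cap\Bun_G^d$ has irreducible fibers, and the complement is lower-dimensional.

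The main obstacle I anticipate is the fiber-dimension estimate over the bad strata $\cH_k$ with $k>0$: one must argue cleanly that the fibration structure degenerates in a controlled way and that the relation between $\dim\cH_k$, the jump in $h^0$, and $\codim\cH_k$ really does pin down $\dim\pi^{-1}(\cH_k)$ by the asserted inequality, rather than merely bounding it loosely. The semicontinuity of $h^1$ is what organizes the $\cH_k$ into locally closed strata, but converting the codimension hypothesis into a sharp statement about $\LS_{G,K}$ requires care at the interface between the classical and dg pictures — in particular, verifying that equality of actual and expected dimension is inherited componentwise so that the local complete intersection conclusion is valid globally.
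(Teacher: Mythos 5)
Your strategy is exactly the paper's: stratify $\Bun_G$ by the loci $\cH_k$, bound $\dim\pi^{-1}(\cH_k)$ using the fiber dimension $h^0(X,\frg_\cF\otimes K)$ plus $\dim\cH_k$, observe that $\pi^{-1}(\cH_0)\to\cH_0$ is a classical affine bundle, and invoke the fact that a quasi-smooth dg-stack whose dimension equals its expected dimension is a classical local complete intersection. Your worry at the end is also unfounded in a good way: only the \emph{upper} bound on $\dim\pi^{-1}(\cH_k)$ is needed, since the matching lower bound $\dim\ge\edim$ comes for free from quasi-smoothness, so no sharp control of the strata is required.

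However, there is an arithmetic slip that makes your displayed count internally inconsistent. Riemann--Roch gives $\chi(\frg_\cF\otimes K)=\dim(G)\cdot(\deg(K)+1-g)$, not $\dim(G)\cdot\deg(K)$ (you have conflated the Euler characteristic of the sheaf $\frg_\cF\otimes K$ with that of the two-term de Rham complex, which is what equals $\dim(G)\cdot\deg(K)$). Consequently the non-empty fiber over $\cF\in\cH_k$ has dimension $(\deg(K)+1-g)\dim(G)+k$, and the correct bound is
\[\dim\pi^{-1}(\cH_k)\le\deg(K)\cdot\dim(G)+k-\codim\cH_k,\]
which under \eqref{cn:goodline} is at most $\edim\LS_{G,K}=\deg(K)\cdot\dim(G)$, with the $k=0$ stratum attaining exactly this value. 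As written, your bound of $(g-1)\dim(G)+\dim(G)\deg(K)$ per stratum contradicts your own claim that the $k=0$ stratum has dimension $\dim(G)\cdot\deg(K)$; the two statements differ by $(g-1)\dim(G)$ and cannot both follow from your fiber formula. Once the Riemann--Roch computation is corrected, the rest of your argument (density of $\pi^{-1}(\cH_0)$ under \eqref{cn:vgoodline}, irreducibility of $\LS^d_{G,K}$ as the closure of an affine bundle over the irreducible open set $\cH_0\cap\Bun_G^d$, and dominance of $\pi$) goes through exactly as in the paper.
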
 
\begin{proof} Note that for any $\cF\in\cH^k$ either $\pi^{-1}(\cF)=\emptyset$, or $\dim\pi^{-1}(\cF)=(\deg(K)+1-g)\cdot\dim(G)+k$; thus,
\[\dim(\pi^{-1}(\cH_k))\le\deg(K)\cdot\dim(G)+k-\codim\cH_k.\] Moreover, the restriction 
\[\pi:\pi^{-1}(\cH_0)\to\cH_0\]
is a (classical) affine bundle of relative dimension $\deg(K)\cdot\dim(G)+k-\codim\cH_k$. Therefore,
under the condition \eqref{cn:goodline}, 
\[\dim\LS_{G,K}\le\deg(K)\cdot\dim G=\edim\LS_{G,K}.\]
If moreover \eqref{cn:vgoodline} holds, then $\pi^{-1}(\cH_0)\subset\LS_{G,K}$ is dense. 
Proposition~\ref{prop:whatisgood} follows.
\end{proof}

\begin{proposition} \label{prop:positiveisgood}
Suppose that 
\[\deg(K)>\max(0,2g-2).\]
Then \eqref{cn:vgoodline}
holds.
\end{proposition}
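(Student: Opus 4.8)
The plan is to reformulate the jumping condition via Serre duality and reduce the whole proposition to a single dimension estimate. Since $\frg$ is self-dual as a $G$-representation (via the Killing form on the semisimple part and a nondegenerate form on the centre), we have $\frg_{\cF}^\vee\simeq\frg_{\cF}$, so Serre duality gives $H^1(X,\frg_{\cF}\otimes K)\simeq H^0(X,\frg_{\cF}\otimes L)^\vee$ where $L:=\Omega_X\otimes K^{-1}$ has degree $\deg L=2g-2-\deg K<0$. Thus $\cH_k=\{\cF:\dim H^0(X,\frg_{\cF}\otimes L)=k\}$. First I would dispose of the semistable locus: if $\cF$ is semistable then $\frg_{\cF}$ is a semistable bundle of slope $0$ (Ramanan--Ramanathan, in characteristic zero), so $\frg_{\cF}\otimes L$ is semistable of negative slope and has no nonzero sections; hence every $\cH_k$ with $k>0$ lies in the unstable locus of $\Bun_G$.

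The key reduction is to the following pairs space. Let $\mathcal M$ be the stack of pairs $(\cF,s)$ with $0\ne s\in H^0(X,\frg_{\cF}\otimes L)$, with projection $q\colon\mathcal M\to\Bun_G$. Over $\cH_k$ the fibres of $q$ are copies of $\mathbb A^k\setminus\{0\}$, of dimension $k$, so $\dim q^{-1}(\cH_k)=\dim\cH_k+k$ and therefore $\codim\cH_k\ge k+(\dim\Bun_G-\dim\mathcal M)$. Consequently it suffices to prove the single inequality $\dim\mathcal M<\dim\Bun_G=(g-1)\dim(G)$; this yields $\codim\cH_k>k$ for all $k>0$ simultaneously. (Notice that $\dim\mathcal M\le\dim\Bun_G$ would give only the weaker condition \eqref{cn:goodline}, matching the dichotomy between the two hypotheses.)

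To bound $\dim\mathcal M$ I would use that it is a global nilpotent cone: because $\deg L<0$, any invariant $p\in\kk[\frg]^G$ homogeneous of positive degree $d$ sends $s$ into $H^0(X,L^{\otimes d})=0$, so $s$ is pointwise nilpotent. A nilpotent $s$ lies generically in a Borel subalgebra, and by properness of $G/B$ this extends to a reduction $\cF_B$ with $s\in H^0(X,\frn_{\cF_B}\otimes L)$; equivalently, I would estimate $\dim\mathcal M$ through the Shatz (Harder--Narasimhan) stratification of $\Bun_G$, on whose strata the Levi bundle $\cF_L$ is semistable. On a stratum with canonical reduction of type $\theta$ to $P$, the graded pieces of $\frg_{\cF}\otimes L$ along the parabolic filtration are the bundles $(\frg_\alpha)_{\cF_L}\otimes L$, and both the stratum codimension (by the Atiyah--Bott--Behrend formula, $\sum_{\alpha\in\Phi(\frn_P)}(g-1+\langle\alpha,\theta\rangle)$) and the contribution of $s$ are organised as sums over roots; comparing them root-by-root, the slack $\deg K-(2g-2)>0$ should produce a strictly positive surplus in each contributing term.

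The hard part will be exactly this last estimate: showing that the dimension carried by the sections $s$ never catches up with the codimension gained from instability, uniformly over all strata and for an \emph{arbitrary} (possibly special, e.g.\ hyperelliptic) curve $X$. The point that must rule out the naive Brill--Noether pathologies is that on each Shatz stratum the bundles $(\frg_\alpha)_{\cF_L}\otimes L$ are semistable, so their cohomology is governed by curve-independent Clifford-type and Riemann--Roch bounds rather than by the genus-dependent geometry of the loci $W^r_d$. Assembling these bounds into the clean inequality $\dim\mathcal M<(g-1)\dim(G)$, with careful bookkeeping of the within-stratum jumping of $H^0((\frg_\alpha)_{\cF_L}\otimes L)$ (this refinement is genuinely needed, as small examples with $\deg K=2g-1$ show that the bare stratum codimension can equal $k$), is where the real work lies.
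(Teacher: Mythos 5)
Your overall strategy coincides with the paper's: pass to $K'=K^\vee\otimes\Omega_X$ of negative degree via Serre duality, reduce \eqref{cn:vgoodline} to the single statement that the locus of pairs $(\cF,A)$ with $0\ne A\in H^0(X,\frg_\cF\otimes K')$ has dimension strictly less than $\dim\Bun_G$, and prove that inequality stratum by stratum in the Shatz stratification. Your reductions are sound: the fibration argument giving $\codim\cH_k\ge k+(\dim\Bun_G-\dim\mathcal M)$, and the vanishing of sections on the semistable locus. But there is a genuine gap: the decisive estimate is only announced, and the heuristic you offer for it --- that the slack $\deg K-(2g-2)>0$ produces a strictly positive surplus in each root term --- is not the actual mechanism and fails in low genus. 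Concretely, on a stratum $\Shatz_P^d$ one has $H^0(X,\frg_\cF\otimes K')=H^0(X,\fru_\cF\otimes K')$, because $\frg_\cF/\fru_\cF$ is an iterated extension of semistable bundles of non-positive degree and $\deg K'<0$; and for a semistable bundle $E$ of slope $\ge-1$ one has $h^0(E)\le\deg E+\rk E$. Applying this to the Harder--Narasimhan subquotients of $\fru_\cF\otimes K'$ after reducing to $\deg K'=-1$ gives $h^0(X,\fru_\cF\otimes K')\le\deg\fru_\cF=\sum_{\phi\in\Lambda_U}\langle d,\sigma(\phi)\rangle$, to be compared with $\codim\Shatz_P^d=\sum_{\phi\in\Lambda_U}\bigl(\langle d,\sigma(\phi)\rangle+(g-1)\bigr)$. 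The termwise surplus is the $(g-1)$, not the slack in $\deg K$: one cannot exploit $\deg K'<-1$ term by term, since the stratum codimension does not grow with $\deg K$ while lowering $\deg K'$ only shrinks $H^0$.

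Thus for $g>1$ the estimate closes, but for $g=1$ it yields only a non-strict inequality, and this is not mere bookkeeping: on an elliptic curve a semistable bundle of slope $0$ can have $h^0$ equal to its rank, so the naive bound is attained. Some additional input is required; the paper resolves $g=1$ by observing that strictness holds for generic $K'$ of fixed negative degree and that the automorphisms of the elliptic curve act transitively on $\Pic^k(X)$ for $k\ne0$, so strictness holds for every $K'$. The case $g=0$ also needs a separate (easy) computation from the splitting $\fru_\cF\simeq\bigoplus_{\phi\in\Lambda_U}\cO_X(\langle d,\sigma(\phi)\rangle)$, including the check that the open stratum (which for $g=0$ may correspond to a proper parabolic) carries no nonzero sections. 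Finally, your detour through pointwise nilpotence and Borel reductions is unnecessary: the vanishing of $H^0$ on $(\frg/\fru)_\cF\otimes K'$ already confines all sections to $\fru_\cF\otimes K'$.
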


Clearly, Propositions~\ref{prop:whatisgood} and \ref{prop:positiveisgood} imply Proposition~\ref{prop:good}. We now proceed to prove Proposition~\ref{prop:positiveisgood}.

\begin{remark} In \cite[Section~1.1]{QHitch}, A.~Beilinson and V.~Drinfeld introduce `good' and `very good' properties of a smooth algebraic stack. If $K=\Omega_X$ is the canonical line bundle, 
the conditions \eqref{cn:goodline} and \eqref{cn:vgoodline} are equivalent to the good property and the 
very good property of $\Bun_G$, respectively.

By \cite[Proposition~2.1.2]{QHitch}, $\Bun_G$ is good if $G$ is semisimple and $g(X)>1$; that is,
the triple $(X,G,\Omega_X)$ satisfies \eqref{cn:vgoodline} under these assumptions. 
From this, it is not hard to see that the triple $(X,G,\Omega_X(D))$ 
satisfies \eqref{cn:vgoodline} if $G$ is reductive, $D>0$, and $g(X)>1$. 

We do not use \cite[Proposition~2.1.2]{QHitch} in the proof of Proposition~\ref{prop:positiveisgood};
our argument
is actually closer to the proof of the $g=1$ case of \cite[Theorem~2.10.4]{QHitch}. 
\end{remark}

\subsection{The Shatz stratification of $\Bun_G$}
We proceed by using the Shatz stratification of $\Bun_G$ (also known as the Harder-Narasimhan stratification). Our 
conventions mostly follow the proof of the $g=1$ case of Theorem~2.10.4 of \cite{QHitch}.

Let $P\subset G$ be a parabolic subgroup; denote by $U\subset P$ its unipotent radical and by
$L=P/U$ its maximal reductive quotient. We have natural maps $p:P\hookrightarrow G$ and $q:P\to L$.
Let $T\subset P$ be a maximal torus.
Consider the character lattices
\[\Hom(L,\gm)=\Hom(P,\gm)\subset\Hom(T,\gm).\]

Denote by $\Lambda_U\subset\Hom(T,\gm)$ the set of weights of the adjoint action of $T$ on the 
Lie algebra $\fru$ of $U$. Define the projection
\[\sigma:\Hom(T,\gm)\to\Hom(L,\gm)\otimes\Q\]
to be the composition
\[\sigma:\Hom(T,\gm)\to\Hom(Z(L),\gm)\hookrightarrow\Hom(Z(L),\gm)\otimes\Q\simeq\Hom(L,\gm)\otimes\Q;\]
here $Z(L)$ is the center of $L$.

Put 
\begin{align*}
X_P&:=\Hom(P,\gm)^\vee=\Hom(\Hom(P,\gm),\Z),\\
X_P^+&:=\{d\in\Hom(P,\gm)^\vee:\langle d,\sigma(\phi)\rangle>0\text{ for any $\phi\in\Lambda_U$}\}\subset X_P.
\end{align*}
(Note that $X_P$ is the quotient of $\pi_1(P)$ by its torsion.) Recall that the degree of a $P$-bundle $\cF$ is the element
$\deg(\cF)\in X_P$ such that
\[\langle \deg(\cF),\phi\rangle=\deg(\phi_*(\cF))\qquad\text{for any $\phi\in\Hom(P,\gm)$}.\]
Fix $d\in X_P^+$ and put
\[\Shatz_P^d:=\{\cF\in\Bun_P:\deg(\cF)=d\text{ and }q_*(\cF)\in\Bun_L\text{ is semistable}\}\subset\Bun_P.\]
It is known that the map $p_*:\Bun_P\to\Bun_G$ restricts to a locally closed embedding 
\[\Shatz_P^d\hookrightarrow\Bun_G,\]
and that the images of these maps over all conjugacy classes of pairs $(P,d\in X_P^+)$ provide a
stratification of $\Bun_G$. We identify $\Shatz_P^d$ with its image in $\Bun_G$.

Direct calculation gives
\[\dim\Shatz_P^d=\dim(P)(g-1)-\sum_{\phi\in\Lambda_P}\langle d,\sigma(\phi)\rangle.\] 
Hence, the codimension of $\Shatz_P^d\subset\Bun_G$ is
\[\codim\Shatz_P^d=\sum_{\phi\in\Lambda_P} \left(\langle d,\sigma(\phi)\rangle+(g-1)\right).\]

\subsection{Proof of Proposition~\ref{prop:positiveisgood}}
Let $K':=K^\vee\otimes\Omega_X$ be the Serre dual of $K$. Consider the stack of $K'$-valued Higgs bundles 
(equivalently, bundles with $K'$-connections for the zero map $\lambda_{K'}:\Omega_X\to K'$):
\[\LS_{G,K'}=\{(\cF,A):\cF\in\Bun_G,A\in H^0(X,\frg_\cF\otimes K')\}.\]
By Serre's duality, the fiber of the projection $\pi':\LS_{G,K'}\to\Bun_G$ over $\cF\in\Bun_G$ is isomorphic to $H^1(X,
\frg_\cF\otimes K)^\vee$. Hence, we need to show that $\dim(\LS_{G,K'})=\dim(\Bun_G)$ and that each component of $\LS_{G,K'}$ of 
maximal dimension is contained in the locus $A=0$.

Fix a parabolic subgroup $P\subset G$ and $d\in X_P^+$, and put
\[\Shatz'_{P,d}:=\Shatz^P_d\times_{\Bun_G}\LS_{G,K'}.\]
It suffices to check that the following two statements hold for any $P$ and $d$:

\begin{enumerate}
\item If $\codim(\Shatz^P_d)=0$ (so that $\Shatz^P_d\subset\Bun_G$ is an open stratum), then the projection
$\Shatz'_{P,d}\to\Shatz^P_d$ is an isomorphism. In other words, if $(\cF,A)\in\LS_{G,K'}$ and $\cF\in\Shatz^P_d$ for
$\codim(\Shatz^P_d)=0$, then $A=0$. 
\item If $\codim(\Shatz^P_d)>0$, then $\dim(\Shatz'_{P,d})<\dim(\Bun_G)$.
\end{enumerate} 

Fix $\cF\in\Shatz_{P,d}$. Note that the subbundle $\fru_\cF\subset\frg_\cF$ is one of the terms of 
the Harder-Narasimhan filtration on $\frg_\cF$; it is uniquely determined by the property that $\fru_\cF$ is an iterated extension
of semistable bundles of positive degree, while $\frg_\cF/\fru_\cF$ is an iterated extension of semistable bundles of non-positive degree. (As above, $U\subset P$ is the unipotent radical and $\fru$ is its Lie algebra.)
Since $\deg(K')<0$, we see that $H^0(X,\frg_\cF\otimes K')=H^0(X,\fru_\cF\otimes K')$.

{\it Proof of (1).} Suppose $\codim(\Shatz^P_d)=0$. This occurs if and only if either $P=G$ or $g=0$ and
$\fru_\cF\simeq\cO_X(1)^{\dim\fru}$. In either case, we see that  
\[H^0(X,\frg_\cF\otimes K')=H^0(X,\fru_\cF\otimes K')=0\]
for any $\cF\in\Shatz^P_d$, as required. (Recall that if $g=0$, we have $\deg(K)>0$, and therefore $\deg(K')<-2$.) 

{\it Proof of (2).} Suppose $\codim(\Shatz^P_d)>0$. Note the following estimate.

\begin{lemma} Let $E$ be a semistable vector bundle on $X$ of slope at least $-1$. Then 
\[\dim H^0(X,E)\le\deg(E)+\rk(E).\]
\label{lm:ssH0}
\end{lemma}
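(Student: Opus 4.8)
The plan is to prove the inequality by induction on $\rk(E)$, but to make the induction close up I would first replace the hypothesis ``$E$ semistable of slope $\ge -1$'' by the weaker and more flexible hypothesis $\mu_{\min}(E)\ge -1$, where $\mu_{\min}$ denotes the slope of the last factor of the Harder--Narasimhan filtration. Since a semistable bundle of slope $\ge -1$ satisfies $\mu_{\min}(E)=\mu(E)\ge -1$, the generalized statement implies Lemma~\ref{lm:ssH0}. The hypothesis $\mu_{\min}(E)\ge -1$ also guarantees $\deg(E)+\rk(E)\ge 0$, so the bound is automatic whenever $H^0(X,E)=0$; in particular the rank-one base case reduces to the elementary fact that a line bundle $L$ with $\deg(L)\ge 0$ has $\dim H^0(X,L)\le \deg(L)+1$ (and $\dim H^0=0$ once $\deg(L)<0$).

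For the inductive step I would assume $H^0(X,E)\ne 0$ and choose a nonzero section $s\colon\cO_X\to E$. Saturating its image produces a line subbundle $L\subset E$ with $\deg(L)\ge 0$, together with a short exact sequence $0\to L\to E\to Q\to 0$ in which $Q$ is a vector bundle of rank $\rk(E)-1$. Taking global sections gives $\dim H^0(X,E)\le \dim H^0(X,L)+\dim H^0(X,Q)$. The first term is at most $\deg(L)+1$ by the rank-one estimate. For the second term I would apply the inductive hypothesis to $Q$, which is legitimate once we know $\mu_{\min}(Q)\ge -1$; then $\dim H^0(X,Q)\le \deg(Q)+\rk(Q)=(\deg(E)-\deg(L))+(\rk(E)-1)$. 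Adding the two bounds, the $\deg(L)$ contributions cancel and one is left with exactly $\deg(E)+\rk(E)$, as desired.

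The step that needs genuine care---and the reason for strengthening the inductive hypothesis---is that the quotient $Q=E/L$ is essentially never semistable, so one cannot simply feed $Q$ back into the original (semistable) statement. This is precisely where the condition on $\mu_{\min}$ earns its keep: for any surjection of bundles $E\twoheadrightarrow Q$ one has $\mu_{\min}(Q)\ge \mu_{\min}(E)$, because the minimal Harder--Narasimhan quotient of $Q$ is also a quotient of $E$ and hence has slope at least $\mu_{\min}(E)$. Thus $\mu_{\min}(Q)\ge \mu_{\min}(E)\ge -1$ and the induction applies to $Q$. I expect this slope comparison for quotient bundles to be the only nontrivial input; everything else is the standard subadditivity of $\dim H^0$ in short exact sequences together with the rank-one estimate.
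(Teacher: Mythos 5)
Your proof is correct, but it takes a genuinely different route from the paper. The paper's argument is a one-line twisting trick that uses semistability head-on: choose an effective divisor $D$ of degree $\lfloor\deg(E)/\rk(E)\rfloor+1$, observe that $E(-D)$ is semistable of negative slope and hence has no sections, so $H^0(X,E)$ injects into the space of sections of the torsion sheaf $E/E(-D)$, whose length is $\rk(E)\deg(D)\le\deg(E)+\rk(E)$. You instead run an induction on rank, strengthening the hypothesis from ``semistable of slope $\ge-1$'' to ``$\mu_{\min}(E)\ge-1$'' so that the quotient by the saturation of a section stays within the inductive class; the only nontrivial input is the standard fact that $\mu_{\min}$ does not decrease under passing to quotient bundles. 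The paper's proof is shorter and avoids the Harder--Narasimhan formalism at this point; yours is more elementary in that it only uses the rank-one estimate plus subadditivity of $h^0$, and it proves a strictly more general statement. That extra generality is not idle: in the application (Case I of the proof of Proposition~\ref{prop:positiveisgood}) the paper must apply the lemma to the semistable subquotients of $\fru_\cF\otimes K'$ and sum, whereas your version could be applied to $\fru_\cF\otimes K'$ directly, since $\mu_{\min}$ of that bundle is at least $-1$ there.
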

\begin{proof} Take an effective divisor $D$ of degree $\deg(D)=\left\lfloor\dfrac{\deg(E)}{\rk(E)}\right\rfloor+1$
and use injectivity of the map
\[H^0(X,E)\to H^0(X,E/E(-D)).\]
\end{proof}

{\it Case I:} $g>1$. It suffices to prove that 
\[\dim(H^0(X,\fru_\cF\otimes K'))<\codim\Shatz_P^d\]
for any $\cF\in\Shatz_P^d$.
Since $\deg(K')<0$, and the right-hand side is independent of $K'$, we may assume that $\deg(K')=-1$. Applying 
Lemma~\ref{lm:ssH0} to the semistable sub-quotients of $\fru_\cF\otimes K'$, we see that
\begin{multline*}
\dim H^0(X,\fru_\cF\otimes K')\le \deg(\fru_\cF\otimes K')+\rk(\fru_\cF\otimes K')=\deg(\fru_\cF)\\
=\sum_{\phi\in\Lambda_U}\langle d,\sigma(\phi)\rangle<
\sum_{\phi\in\Lambda_U} \left(\langle d,\sigma(\phi)\rangle+(g-1)\right)=\codim\Shatz_P^d,
\end{multline*}
as claimed.

{\it Case II:} $g=1$. The argument used in Case I now gives only the non-strict inequality
\[\dim\Shatz'_{P,d}\le\dim(\Bun_G).\]
However, it is easy to see that the inequality is strict for generic $K'$ of given degree $k<0$. 
Since the automorphisms of the elliptic curve $X$ act transitively on $\Pic^k(X)$ for $k\ne 0$, we see that the
inequality is in fact strict for all $K'$.

{\it Case III:} $g=0$. This case easily follows from the explicit formula  
\[\fru_\cF\simeq\bigoplus_{\phi\in\Lambda_U}\cO_X(\langle d,\sigma(\phi)\rangle).\]

\section{Oper Higgs bundles}\label{sc:witness}
In this section, we prove Proposition~\ref{prop:witness}. Recall that we are given $d\in\pi_1(G)$
and a line bundle $K$ on $X$ satisfying $H^0(X,K)\ne 0$. We need to construct a triple
$(\cF,A,\cF_B)$, where $\cF$ is a $G$-bundle, $A\in H^0(X,\frg_\cF\otimes K)$ is a $K$-valued Higgs field on $\cF$,
and $\cF_B$ is a $B$-structure on $\cF$. The triple $(\cF,A,\cF_B)$ must satisfy the following conditions:
\begin{itemize}
\item $\deg(\cF)=d$;
\item $A$ is a degenerate oper with respect to $\cF_B$, that is, $A\in H^0(X,\ffg{-1}_{\cF_B}\otimes K)$;
\item The complex \[R\Gamma((\frg/\frb)_{\cF_B}\overset{\ad(A)}\to(\frg/\frg^{-1})_{\cF_B}\otimes K)\]
has cohomology in degree zero only.
\end{itemize}

The construction of the triple $(\cF,\nabla,\cF_B)$ proceeds in
two steps. 

\subsection{Proof of Proposition~\ref{prop:witness}: first step} 
Fix a maximal torus $T\subset B$.

Choose a section $A_0\in H^0(X,\frt\otimes_{\kk} K)$ that is generically regular as
a section of $\frg\otimes_{\kk}K$. Suppose $A_0$ is regular at all points of 
$X-\Sigma$, where $\Sigma\subset X$ is a finite nonempty subset.

Now choose a $T$-bundle $\cF'_T$ that is sufficiently negative in the following way: for each simple root 
$\alpha\in S$, there exists a section $A_{-\alpha}\in H^0(X,(\frg_{-\alpha})_{\cF'_T}\otimes K)$ that does not vanish on $\Sigma$.
Let us fix such sections.

Consider now the induced $G$-bundle $\cF':=G_{\cF'_T}$ and equip it with the Higgs field 
\[A=A_0+\sum_{\alpha\in S}A_{-\alpha}\in\frg_{\cF'_T}\otimes K.\]
This Higgs field is a (generically non-degenerate) oper with respect to the natural $B$-structure $\cF'_B:=B_{\cF'_T}$ on $\cF'$.
We thus have the complex
\[R\Gamma((\frg/\frb)_{\cF'_B}\overset{\ad(A)}\to(\frg/\frg^{(-1)})_{\cF'_B}\otimes K).\]

\begin{lemma} \label{lm:adsur}
The morphism of vector bundles
\[\ad(A):(\frg/\frb)_{\cF'_B}\to(\frg/\frg^{(-1)})_{\cF'_B}\otimes K\]
is surjective.
\end{lemma}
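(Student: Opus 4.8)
**

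The plan is to verify the surjectivity of $\ad(A)$ fiberwise, reducing the question to linear algebra in the Lie algebra $\frg$. Since both source and target are vector bundles on the curve $X$, the morphism $\ad(A)$ is surjective if and only if it is surjective on fibers at every closed point $x\in X$. At a point $x$, the map is $\ad(A(x)):(\frg/\frb)\to(\frg/\ffg{-1})\otimes K_x$, where $A(x)=A_0(x)+\sum_{\alpha\in S}A_{-\alpha}(x)$ is the value of the Higgs field (using a local trivialization of $\cF'_T$ and $K$). So the content of the lemma is a pointwise statement about the operator $\ad(A(x))$ acting between the graded pieces of the principal filtration.

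First I would record the key structural fact about the filtration $\ffg{k}$: since it is the filtration associated to the principal $\mathfrak{sl}(2)$-triple, the adjoint action of a \emph{regular} element interacts well with it. Concretely, the target $\frg/\ffg{-1}$ is dual (via the Killing form) to $\frn=\ffg{1}$, and the source $\frg/\frb$ is dual to $\frb=\ffg{0}$; the principal grading identifies these quotients with specific sums of root spaces. The crucial point is that $\ad(A_0(x))$, being the adjoint action of a \emph{regular semisimple} element away from $\Sigma$, is invertible on the appropriate root-space complement, while at points of $\Sigma$ the lower-triangular contributions $\ad(A_{-\alpha}(x))$ must compensate for the degeneration of $A_0$.

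The argument then splits according to whether $x\in\Sigma$ or $x\notin\Sigma$. For $x\notin\Sigma$, the element $A_0(x)\in\frt$ is regular in $\frg$, so $\ad(A_0(x))$ is invertible on $\frg/\frc$ where $\frc$ is the centralizer; since $A_0(x)$ is regular semisimple its centralizer is exactly $\frt$, and I would check that $\ad(A_0(x))$ alone already surjects $(\frg/\frb)$ onto $(\frg/\ffg{-1})$ by comparing root-space dimensions and noting that $\ad(A_0(x))$ preserves root spaces up to scalar. For $x\in\Sigma$, where $A_0(x)$ may fail to be regular, I would use that the simple-root components $A_{-\alpha}(x)\in\frg_{-\alpha}$ are \emph{nonzero} by construction; the lower-triangular part $\sum_\alpha\ad(A_{-\alpha}(x))$ moves the simple root spaces $\frg_\alpha$ (which span $\frb/\frt$ modulo higher terms) down into $\frt$, and combined with the action on the remaining positive root spaces this forces surjectivity.

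The main obstacle I anticipate is the analysis at the bad points $x\in\Sigma$, where $A_0(x)$ degenerates and one cannot rely on regularity. Here the surjectivity must come entirely from the interplay between the semisimple part $\ad(A_0(x))$ and the nonvanishing simple-root terms $\ad(A_{-\alpha}(x))$, and making this precise requires a careful weight-by-weight bookkeeping using the principal $\mathfrak{sl}(2)$-grading. The cleanest approach is probably to choose the section $A_0$ so that its values on $\Sigma$, while irregular, are still controlled (e.g.\ so that $\ad(A_0(x))$ together with the fixed simple-root data spans the requisite image), which is exactly what the phrase ``generically regular'' and the nonvanishing condition on $\Sigma$ are engineered to guarantee. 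I expect the proof to conclude by exhibiting, at each $x\in\Sigma$, an explicit triangular or block structure showing the combined operator has full rank onto $(\frg/\ffg{-1})$.
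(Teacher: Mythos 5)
Your reduction to a fiberwise statement and your treatment of points $x\notin\Sigma$ are essentially the paper's argument: there one filters $\frg/\frb\simeq\frn^-$ by the height of negative roots, observes that $\ad(A_{-\alpha}(x))$ strictly increases the filtration degree while $\ad(A_0(x))$ preserves it and acts on each root space $\frg_{-\beta}$ by the nonzero scalar $-\beta(A_0(x))$, and concludes by surjectivity on the associated graded. You should make the triangularity explicit (surjectivity of the ``diagonal part'' $\ad(A_0(x))$ alone does not formally imply surjectivity of the sum without it), but the idea is the right one.

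The genuine gap is at the points $x\in\Sigma$, which you yourself flag as the main obstacle and then do not resolve. Your description of the mechanism there is off on two counts. First, the source $\frg/\frb$ is spanned by the \emph{negative} root spaces, not by $\frb/\frt$ or the positive root spaces, so the picture of ``moving $\frg_\alpha$ down into $\frt$'' does not describe the map in question. Second, your proposed fix --- choosing $A_0$ so that its values on $\Sigma$ are ``controlled'' --- is both a modification of the construction and unnecessary: the point of the paper's argument is that at $x\in\Sigma$ the value $A_0(x)$ is completely irrelevant. One filters the source by $\ffg{-k}/\frb$ and the target by $\ffg{-k-1}/\ffg{-1}$; since $A\in\ffg{-1}\otimes K$ and $[\ffg{-1},\ffg{-k}]\subset\ffg{-k-1}$, the map $\ad(A(x))$ respects these filtrations, and on the associated graded $\ffg{-k}/\ffg{-k+1}\to\ffg{-k-1}/\ffg{-k}$ it is induced by the class of $A(x)$ in $\ffg{-1}/\frb$, namely $f:=\sum_{\alpha\in S}A_{-\alpha}(x)$. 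Because each $A_{-\alpha}$ was chosen not to vanish on $\Sigma$, $f$ is a principal nilpotent, and by $\mathfrak{sl}(2)$-theory $\ad(f):\frg_{-k}\to\frg_{-k-1}$ is surjective for all $k\ge 1$ in the principal grading. This is the single linear-algebra fact your proposal is missing; without it (or a substitute), the case $x\in\Sigma$ remains unproved.
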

\begin{proof} At points of $\Sigma$, the map is surjective because it is surjective on the associated graded
of the filtrations
\[\xymatrix@C=2em{
0\ar@{}[rr]|*+{\subset}&&
(\ffg{-1}/\frb)_{\cF'_B}\ar[d]\ar@{}[rr]|*+{\subset\dots\subset}&&
(\ffg{-k}/\frb)_{\cF'_B}\subset\dots\ar[d]\\
0\ar@{}[rr]|*+{\subset}&&
(\ffg{-2}/\ffg{-1})_{\cF'_B}\otimes K\ar@{}[rr]|*+{\subset\dots\subset}&&
(\ffg{-k-1}/\ffg{-1})_{\cF'_B}\otimes K\subset\dots
}\]

At points of $X-\Sigma$, the map is surjective because it is surjective on the associated graded
of the filtrations
\[\xymatrix@C=2em{
{}\ar@{}[rr]|*+{\dots\subset}&&
(\frg^{[k]})_{\cF'_T}\ar[d]\ar@{}[rr]|*+{\subset\dots\subset}&&
(\frg^{[2]})_{\cF'_T}\ar[d]\ar@{}[r]|*+{\subset}&
(\frg^{[1]})_{\cF'_T}=(\frg/\frb)_{\cF'_B}\\
{}\ar@{}[rr]|*+{\dots\subset}&&
(\frg^{[k]})_{\cF'_T}\otimes K\ar@{}[rr]|*+{\subset\dots\subset}&&
(\frg^{[2]})_{\cF'_T}\otimes K\ar@{}[r]|*+{=}&
(\frg/\ffg{-1})_{\cF'_B}\otimes K.
}\]
Here $\frg^{[k]}$ is the filtration of $\frg$ with respect to the opposite Borel sublagebra $\frb^-$,
which is defined recursively by 
\begin{align*}
\frg^{[0]}&=\frb^-\\ 
\frg^{[1]}&=[\frb^-,\frb^-]\\
\frg^{[k+1]}&=[\frg^{[k]},\frg^{[1]}]&(k>0)
\end{align*}
\end{proof}
Put
\[E':=\ker\left(\ad(A):(\frg/\frb)_{\cF'_B}\to(\frg/\ffg{-1})_{\cF'_B}\otimes K\right).\]

\subsection{Proof of Proposition~\ref{prop:witness}: second step}
Pick a point $x\in X-\Sigma$. There exists $N\ge 0$ such that $H^1(X,E'(Nx))=0$. Now pick a cocharacter
$\gamma:\gm\to T$ with the following properties:

\begin{itemize}
\item For any simple root $\alpha\in S$, we have $\langle\gamma,\alpha\rangle\le -N$.
\item The image of the sum $\gamma+\deg(\cF'_T)\in\Hom(\gm,T)$ in $\pi_1(G)$ is the prescribed degree $d$.
(Recall that $\pi_1(G)$ is isomorphic to the quotient of the cocharacter lattice $\Hom(\gm,T)=\pi_1(T)$ by the 
coroot lattice.)
\end{itemize}

Now let $\cF_T$ be the modification of $\cF'_T$ at $x$ corresponding to the cocharacter $\gamma$. That is, 
\[\cF_T=(\cF'_T\times_X\cO(x)^\times)/\gm,\]
where $\cO(x)^\times$ is the complement of the zero section in the total space of the line bundle $\cO_X(x)$, and
the action of $\gm$ on $\cF'_T\times_X\cO(x)^\times$ is given by
\[a\cdot(s,f)=(\gamma(a)s,a^{-1}f)\qquad a\in\gm,s\in\cF'_T,f\in\cO(x)^\times.\]

Let $\cF:=G_{\cF_T}$ be the induced $G$-bundle. $\cF$ is equipped with the
natural $B$-structure $\cF_B:=B_{\cF_T}$.

By construction, we have an identification $\cF'|_{X-x}=\cF_{X-x}$, which induces a rational map of vector bundles
\[
\iota:\frg_{\cF'}\dashrightarrow\frg_\cF.
\]
The choice of $\gamma$ implies the following property of $\iota$:
\begin{lemma} $\iota$ induces a regular map 
\[(\frg^{[k]})_{\cF'_T}\otimes\cO((kN)x)\to(\frg^{[k]})_{\cF_T}.\]
\qed
\end{lemma}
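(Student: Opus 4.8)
The plan is to analyze the rational map $\iota:\frg_{\cF'}\dashrightarrow\frg_\cF$ explicitly on the weight spaces of the $T$-action, where $T$ acts via the filtration $\frg^{[k]}$ by the opposite Borel $\frb^-$. The key observation is that the modification $\cF_T=(\cF'_T\times_X\cO(x)^\times)/\gm$ twists by the cocharacter $\gamma$ at the single point $x$, so for any representation $V$ of $T$ (or $G$) decomposed into weight spaces $V=\bigoplus_\mu V_\mu$, the induced bundle transforms weight-space by weight-space: a weight-$\mu$ piece of $V_{\cF'_T}$ is modified by a twist of $\cO(\langle\gamma,\mu\rangle\, x)$. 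Applying this to $V=\frg$ and its $\frb^-$-filtration, the subquotient $\frg^{[k]}/\frg^{[k+1]}$ is built from root spaces $\frg_{-\beta}$ where $\beta$ runs over positive roots of height $k$ (together with the Cartan in degree zero), so each such root space gets twisted by $\cO(\langle\gamma,-\beta\rangle\, x)=\cO(-\langle\gamma,\beta\rangle\, x)$.

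First I would make precise the comparison of $(\frg^{[k]})_{\cF'_T}$ with $(\frg^{[k]})_{\cF_T}$ under $\iota$. Away from $x$ the two bundles are canonically identified, so $\iota$ is an isomorphism over $X-x$; the only issue is the order of pole or zero at $x$, governed entirely by the pairing of $\gamma$ with the $T$-weights occurring in $\frg^{[k]}$. The strategy is to bound these weights: every weight $\mu$ appearing in $\frg^{[k]}$ is of the form $\mu=-\beta$ for a positive root $\beta$ that is a sum of at least $k$ simple roots (this is exactly what the recursive definition $\frg^{[k+1]}=[\frg^{[k]},\frg^{[1]}]$ encodes, since $\frg^{[1]}=[\frb^-,\frb^-]=\bigoplus_{\beta>0}\frg_{-\beta}$ and bracketing raises the height). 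Hence $\langle\gamma,\mu\rangle=-\langle\gamma,\beta\rangle=-\sum_{\alpha\in S}c_\alpha\langle\gamma,\alpha\rangle$ where $\beta=\sum_\alpha c_\alpha\alpha$ with $\sum_\alpha c_\alpha\ge k$. Using the defining property $\langle\gamma,\alpha\rangle\le -N$ for every simple root $\alpha$, each term satisfies $-c_\alpha\langle\gamma,\alpha\rangle\ge c_\alpha N$, so $\langle\gamma,\mu\rangle\ge kN$, i.e. $-\langle\gamma,\mu\rangle\le -kN\le kN$ — the twist at $x$ is by a divisor of order at least $-kN$, meaning $\iota$ has a pole of order at most $kN$ on the weight-$\mu$ component.

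The main step is then to assemble these weight-space estimates into the single claimed regular map. Since $\iota$ acquires a pole of order at most $kN$ on each weight component of $(\frg^{[k]})_{\cF'_T}$, pre-twisting the source by $\cO((kN)x)$ cancels every such pole, so the composite
\[
(\frg^{[k]})_{\cF'_T}\otimes\cO((kN)x)\xrightarrow{\ \iota\otimes\mathrm{id}\ }(\frg^{[k]})_{\cF_T}
\]
extends to a regular (everywhere-defined) map of vector bundles, which is exactly the assertion. The one point requiring care is uniformity across the weight spaces: a priori $\iota$ could have different pole orders on different root spaces, and I must check that taking the single twist $\cO((kN)x)$ dominates the worst case. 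This is guaranteed by the bound $\sum_\alpha c_\alpha\ge k$ holding simultaneously for all weights in $\frg^{[k]}$, which is the content of the recursive definition of the filtration; so the uniform twist by $kN$ suffices and no finer weight-dependent adjustment is needed.

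The anticipated main obstacle is purely bookkeeping rather than conceptual: correctly tracking the sign conventions for the $\gm$-action in the modification (the action $a\cdot(s,f)=(\gamma(a)s,a^{-1}f)$ determines whether a positive pairing $\langle\gamma,\mu\rangle$ produces a twist by $\cO(+\langle\gamma,\mu\rangle\, x)$ or its inverse), and verifying that this sign lines up so that the pole — not a zero — is what gets canceled by $\otimes\,\cO((kN)x)$. Once the sign is pinned down and the height bound $\sum c_\alpha\ge k$ is invoked, the lemma follows immediately, which is why it is stated with \qed and no written proof.
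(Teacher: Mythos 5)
Your weight-space analysis is the right (and essentially the only) argument here, and the key inequality is exactly the content the paper leaves to the reader: every $T$-weight $\mu$ occurring in $\frg^{[k]}$ is of the form $-\beta$ for a positive root $\beta$ of height at least $k$ (plus the weight-zero Cartan when $k=0$), so $\langle\gamma,\mu\rangle=-\langle\gamma,\beta\rangle\ge kN$ by the defining property $\langle\gamma,\alpha\rangle\le-N$ of $\gamma$.

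However, your final assembly paragraph has the direction of the twist backwards, and as literally written the deduction would fail. Twisting the \emph{source} by the effective divisor $(kN)x$ enlarges it (its sections are allowed poles of order $kN$ at $x$), so it cannot ``cancel a pole'' of $\iota$; to cancel a pole of order $p$ one twists the source by $\cO(-px)$ or the target by $\cO(px)$. What actually happens is the opposite of a pole: on the weight-$\mu$ component the modification gives
\[(\frg_\mu)_{\cF_T}\;=\;(\frg_\mu)_{\cF'_T}\otimes\cO\bigl(\langle\gamma,\mu\rangle x\bigr),\]
with $\iota$ the tautological identification away from $x$ (the sign of the exponent is pinned down by the paper's later use of the $k=1$ case, where the map $(\frg/\frb)_{\cF'_B}\otimes\cO(Nx)\to(\frg/\frb)_{\cF_B}$ is asserted to be regular and injective). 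Since $\langle\gamma,\mu\rangle\ge kN\ge0$ for every weight of $\frg^{[k]}$, the target contains $(\frg_\mu)_{\cF'_T}\otimes\cO((kN)x)$ as a subsheaf: $\iota$ is already regular on $\frg^{[k]}$ and vanishes to order at least $kN$ at $x$, and it is this excess of vanishing that absorbs the extra twist on the source. Replace ``pole of order at most $kN$, cancelled by the twist'' with ``zero of order at least $kN$, absorbing the twist,'' and your argument is complete and agrees with the intended one.
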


In particular, $\iota$ gives a regular map $(\frb^-)_{\cF'_T}\to(\frb^-)_{\cF'}$. This allows us to view 
\[A\in H^0(X,(\frb^-)_{\cF'_T}\otimes K)\subset H^0(X,\frg_{\cF'}\otimes K)\]
as a Higgs field on $\cF$:
\[A\in H^0(X,(\frb^-)_{\cF_T}\otimes K)\subset H^0(X,\frg_\cF\otimes K).\]
We claim that the triple $(\cF,A,\cF_B)$ satisfies the conditions of Proposition~\ref{prop:witness}. The first
condition is clear from the construction. It remains to show that the complex
\[R\Gamma((\frg/\frb)_{\cF_B}\overset{\ad(A)}\to(\frg/\frg^{(-1)})_{\cF_B}\otimes K)\]
has cohomology in degree zero only.

Lemma~\ref{lm:adsur} applies to the triple $(\cF,A,\cF_B)$, and therefore
\[R\Gamma((\frg/\frb)_{\cF_B}\overset{\ad(A)}\to(\frg/\frg^{(-1)})_{\cF_B}\otimes K)=R\Gamma(E),\]
where
\[E=\ker\left(\ad(A):(\frg/\frb)_{\cF_B}\to(\frg/\frg^{(-1)})_{\cF_B}\otimes K\right).\]
We need to verify that $H^1(X,E)=0$. However, using isomorphism $\frg/\frb=\frg^{[1]}$, we see that $\iota$
induces a regular injective map
\[(\frg/\frb)_{\cF'_B}\otimes\cO(Nx)\to(\frg/\frb)_{\cF_B}.\]
Since $E'\subset (\frg/\frb)_{\cF'_B}$ and $E\subset (\frg/\frb)_{\cF_B}$ are subbundles (that is, saturated subsheaves), and they agree on $X-\Sigma$, we see that $\iota$ also induces a regular injective map
\[E'(Nx)\to E.\]
Recall now that $H^1(X,E'(Nx))=0$; therefore, $H^1(X,E)=0$ as claimed.

This completes the proof of Proposition~\ref{prop:witness}.

\section{Opers on the formal disk}\label{sec:formaldisk}

This section contains the global-to-local argument that derives Theorem~\ref{th:FZ} from Theorem~\ref{th:main}.

\subsection{Connections on the formal disk}
Let us start with a few easy observations about connections on the punctured formal disk. The observations
follow from the reduction theory of formal connections due to D.~G.~Babbitt and V.~S.~Varadarajan \cite{BV}, but we were unable
to find a perfect reference; we therefore provide
proofs for the reader's convenience. 

As in Section~\ref{sec:intro},
suppose $\K=\kk((t))$, and put
\[\Conn_G(\K)=\{d+A(t):A(t)\in\frg\otimes_\kk\Omega_{\K/\kk}\}.\] 
For $\nabla_1,\nabla_2\in\Conn_G(\K)$, we write $\nabla_1\sim\nabla_2$ to indicate that the two connections are gauge-equivalent, that is, that \[\nabla_1=\Ad(g)\nabla_2\] for some $g\in G(\K)$. Put $\OO=\kk[[t]]$.

Recall that $\nabla\in\Conn_G(\K)$ is irreducible if whenever $\nabla\sim\nabla'$,
then 
\[\nabla'\not\in\Conn_P(\K)\]
for any proper parabolic subgroup $P\subset G$. It is easy to construct irreducible connections over $\K$: 

\begin{lemma} \label{lm:irred}
There exist irreducible $\nabla\in\Conn_G(\K)$.
\end{lemma}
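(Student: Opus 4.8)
The plan is to construct an explicit irreducible connection over $\K=\kk((t))$ by exhibiting one whose associated formal monodromy (Stokes/exponential) data is manifestly irreducible, and then verify irreducibility directly. The most transparent route uses the principal $\frsl(2)$ structure: since the filtration $\ffg{k}$ on $\frg$ is the one coming from the principal $\frsl(2)$, there is a principal nilpotent $f\in\frn^-$ (a regular element of $\frg$) that I can use to build a connection of the shape $\nabla=d+\bigl(f+h(t)\bigr)\frac{dt}{t}$, where $h$ is chosen so that the leading exponential part of $\nabla$ is regular semisimple. The point of using a regular (principal) element is that its centralizer in $\frg$ is as small as possible, which is exactly what forces irreducibility.

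Concretely, I would first pick a regular semisimple element $s\in\frt$ whose eigenvalues (under the adjoint action) are $\kk$-linearly independent, or more robustly, choose the connection to have an irregular singularity with slope data that cannot be reduced to any proper parabolic. The cleanest model is $\nabla = d + A\,\frac{dt}{t}$ with $A = f + t^{-1}s$ (or a suitable twist), where $s$ is regular semisimple: after passing to the formal/ramified solution, the exponents are governed by the eigenvalues of $s$, and distinctness of these eigenvalues is what rules out any $\nabla$-invariant parabolic reduction. I would record precisely which genericity condition on the leading term guarantees that no conjugate $\Ad(g)\nabla$ lands in $\Conn_P(\K)$ for a proper parabolic $P$.

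The key step, and the main obstacle, is the verification of irreducibility itself, i.e.\ showing that the chosen $\nabla$ admits no $\nabla$-stable reduction to any proper parabolic after arbitrary gauge transformation by $G(\K)$. The natural tool is the Babbitt--Varadarajan reduction theory of formal connections cited just before the lemma: it provides a canonical form (a direct sum decomposition governed by the exponential factors and their formal monodromy) that is a gauge invariant. I would argue that for my explicit $\nabla$ the canonical form has a single exponential factor whose associated residue/monodromy acts irreducibly, so that the centralizer of the formal type inside $G$ is a torus (or, more to the point, contains no proper parabolic stabilizing the data). Equivalently, a $\nabla$-invariant parabolic reduction would have to respect the exponential factor decomposition, and regularity of the leading term leaves no room for such a reduction.

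A subtlety to handle carefully is that irreducibility over $\K$ is a statement up to the full gauge group $G(\K)$, including gauges that change the singularity structure, so I cannot simply inspect one matrix representative; I must argue at the level of the gauge-invariant formal data. I expect the argument to reduce to the statement that the stabilizer in $G$ of a regular semisimple (or regular) formal type is contained in no proper parabolic, which follows from the fact that the centralizer of a regular element is contained in no proper parabolic. I would therefore organize the proof as: (i) write down the explicit $\nabla$ with a regular leading term; (ii) invoke Babbitt--Varadarajan to reduce the question to the invariant formal type; and (iii) conclude irreducibility from the regularity of that type. The routine verifications—that the constructed $A$ has the claimed leading behavior and that the formal type is genuinely regular—I would leave as short computations.
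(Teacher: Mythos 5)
There is a genuine gap, and it is in the central claim rather than in a routine verification. The concrete connection you propose, $\nabla=d+\bigl(f+t^{-1}s\bigr)\frac{dt}{t}$ with $s\in\frt$ regular semisimple, is \emph{reducible}: its leading term at the second-order pole is regular semisimple, so by the standard splitting lemma (the first step of the Babbitt--Varadarajan/Levelt--Turrittin reduction you cite) it is formally gauge equivalent to a connection with values in the centralizer of $s$, i.e.\ in $\frt$ itself. It therefore reduces to $T$, hence to every Borel containing $T$ --- for $\GL(n)$ it is literally a direct sum of $n$ rank-one connections. Distinctness of the eigenvalues of the leading term is the mechanism for \emph{decomposing} a formal connection, not for making it irreducible. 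The group-theoretic fact you want to fall back on, that ``the centralizer of a regular element is contained in no proper parabolic,'' is also false: the centralizer of a regular semisimple element is a maximal torus, which lies in every Borel through it, and the centralizer of a regular nilpotent lies in a Borel as well. So steps (i) and (iii) of your outline both fail as stated, and step (ii) alone cannot rescue them.

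The paper's construction is genuinely different: it takes $\nabla=d+A_{-2}t^{-2}dt+A_{-1}t^{-1}dt$ with $A_{-2}\in\frb$ a regular \emph{nilpotent} and $A_{-1}$ having nonzero component along the lowest root space $\frg_{-\theta}$ (a Frenkel--Gross-type connection of slope $1/h$; the sum $A_{-2}+A_{-1,-\theta}$ is a cyclic element, which becomes regular semisimple only after a ramified pullback, and the formal monodromy then acts by a Coxeter element --- this is what prevents any parabolic reduction). Moreover, the paper does not route the irreducibility proof through canonical forms at all: it handles your ``subtlety'' about the full gauge group by the Iwasawa decomposition $G(\K)=B(\K)\cdot G(\OO)$, which reduces to gauge transformations $g\in G(\OO)$, and then a two-term expansion $g\equiv g_0(1+g_1t)\bmod t^2$ shows directly that the conditions $\Ad(g_0)A_{-2}\in\frp$ and $\Ad(g_0)(A_{-1}+[g_1,A_{-2}])\in\frp$ are contradictory, since bracketing with the nilpotent $A_{-2}\in\frb$ cannot create a $\frg_{-\theta}$-component. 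If you want to salvage your plan, you must replace the regular semisimple leading term by a twisted/cyclic one of non-integral slope and replace the false centralizer claim by an argument about the Coxeter twist; at that point the paper's elementary computation is considerably shorter.
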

\begin{proof} Here is one possible construction, which is a variation of the connection studied 
by E.~Frenkel and B.~Gross in \cite{FG}. Put
\[\nabla=d+A_{-2}t^{-2}dt+A_{-1}t^{-1}dt\]
for the following $A_{-2},A_{-1}\in\frg$:

\begin{itemize}
\item $A_{-2}\in\frb$ is a regular nilpotent
\item $A_{-1}\in\frg$ is such that its projection onto $\frg_{-\theta}$ is non-zero for any highest root $\theta$
(if $\frg$ is semisimple, the highest root is unique). Here $\frg_{-\theta}$ is the root space corresponding
to the root $-\theta$.
\end{itemize}

Let us show that $\nabla$ is irreducible. Indeed, suppose $\nabla'\sim\nabla$ and $\nabla\in\Conn_P(\K)$ 
for a proper parabolic subgroup $P\subset G$. Without loss of generality, we may assume that $P\supset B$. 
Choose $g\in G(\K)$ such that $\nabla'=\Ad(g)\nabla$. By the Iwasawa decomposition, we can write $g=b\cdot g'$
for $b\in B(\K)$ and $g'\in G(\OO)$. Let us replace $g$ with $g'$ and assume from the beginning that
$g\in G(\OO)$. 

Let us write \[g\equiv g_0\cdot (1+g_1t)\mod t^2\qquad(g_0\in G,g_1\in\frg).\] Then
\[\Ad(g)\nabla\equiv d+\Ad(g_0)A_{-2}t^{-2}dt+\Ad(g_0)(A_{-1}+[g_1,A_{-2}])t^{-1}dt\mod\Omega_{\OO/\kk}.\]
However, now the condition $\Ad(g)\nabla\in\Conn_P(\K)$ leads to a contradiction: it implies that
\[\begin{cases}\Ad(g_0)A_{-2}\in\frp\\ \Ad(g_0)(A_{-1}+[g_1,A_{-2}])\in\frp\end{cases}\]
the first condition yields $g_0\in P$, while in the second condition, $A_{-1}+[g_1,A_{-2}]\not\in\frp$. 
Here $\frp$ is the Lie algebra of $P$.
\end{proof}

\begin{lemma} \label{lm:gauge class is open}
The gauge-equivalence classes are open in $\Conn(\K)$: for any $\nabla\in\Conn(\K)$, there exists
$n$ such that whenever \[\nabla'-\nabla\in t^n\frg\otimes\Omega_{\OO/\kk},\] 
we have $\nabla'\sim\nabla$.
\end{lemma}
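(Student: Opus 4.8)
The plan is to show that whenever $\nabla'-\nabla$ has sufficiently high $t$-adic valuation, we can produce a gauge transformation $g\in G(\OO)$ close to the identity with $\Ad(g)\nabla=\nabla'$. Write $\nabla=d+A$ with $A\in\frg\otimes\Omega_{\K/\kk}$ and $\nabla'=d+A+B$, where $B\in t^n\frg\otimes\Omega_{\OO/\kk}$, and look for $g=1+h$ with $h$ of positive valuation. Expanding the gauge action gives
\[
\Ad(1+h)\nabla-\nabla=-\ad(\nabla)(h)+Q(h),\qquad \ad(\nabla)(h)=dh+[A,h],
\]
where $Q(h)$ collects the terms that are at least quadratic in $h$; thus solving $\Ad(g)\nabla=\nabla'$ amounts to solving $-\ad(\nabla)(h)+Q(h)=B$. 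The linearization is the operator $\ad(\nabla)$ acting on $\frg\otimes\K$ — exactly the differential of the tangent complex $\frg_{\cF}\overset{\ad(\nabla)}\to\frg_{\cF}\otimes K$, now over $\spec\K$. The two remaining tasks are: (i) solve the linearized equation $\ad(\nabla)(h)=B$ for $B$ of high valuation, and (ii) upgrade this to a solution of the nonlinear equation by successive approximation.

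For task (i) I would prove the following high-degree solvability statement: there are integers $n_0$ and $\ell$ so that for every $n\ge n_0$ one has $t^n\frg\otimes\Omega_{\OO/\kk}\subset\ad(\nabla)\bigl(t^{\,n-\ell}\frg\otimes\OO\bigr)$, with $\ad(\nabla)$ shifting valuation by at most $\ell$. When $A$ is regular, or has a pole of order one, this is an elementary recursion: comparing coefficients of $t^k$, the unknown $h_{k+1}$ enters through the operator $(k+1)\,\mathrm{Id}+\ad(a_{-1})$ (or simply multiplication by $k+1$), which is invertible for all large $k$, so one solves for $h$ term by term, gaining one in valuation.

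The main obstacle is the irregular case, where $A$ has a pole of order $r\ge 2$ whose leading coefficient $a_{-r}$ is not regular semisimple: then the matrix leading symbol $\ad(a_{-r})$ of $\ad(\nabla)$ is degenerate and the naive recursion stalls on its cokernel. The device to bypass this is to reduce $\ad(\nabla)$, a meromorphic connection on the $N$-dimensional $\K$-vector space $\frg\otimes\K$ (with $N=\dim\frg$), to scalar form by choosing a cyclic vector, which exists in characteristic zero. This replaces $\ad(\nabla)$ by an $N$-th order scalar operator $P=\partial^N+p_{N-1}\partial^{N-1}+\dots+p_0$ with $p_i\in\K$ and $\partial=d/dt$; passing between $h$ and the associated scalar unknown through the cyclic basis only shifts valuations by a bounded amount, absorbed into $\ell$. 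Solving $Py=f$ with $f$ of valuation $\ge n$ is again a term-by-term recursion, but now the leading coefficient of the recursion in each degree is the value $I(k)$ of a single \emph{indicial polynomial} built from the most singular coefficients of $P$; crucially $I$ is a nonzero polynomial in $k$, so it vanishes for only finitely many $k$. Taking $n_0$ beyond the largest root of $I$ makes the recursion solvable with bounded valuation shift, which is the required high-degree surjectivity. (Equivalently, this is the classical finiteness of the formal de Rham cohomology of a meromorphic connection; the underlying reduction theory is that of Babbitt--Varadarajan \cite{BV}.)

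With task (i) in hand, task (ii) is a Newton-type iteration. Since $Q(h)$ is at least quadratic, given an approximate solution $h$ with residual $\rho=B-\bigl(-\ad(\nabla)(h)+Q(h)\bigr)$ of valuation $v$, solving the linear correction $\ad(\nabla)(\delta h)=-\rho$ produces $\delta h$ of valuation $\ge v-\ell$, and the new residual has valuation strictly larger than $v$ provided the starting valuation $n-\ell$ is large enough that the quadratic error outruns the linear term it corrects. The $t$-adic completeness of $\frg\otimes\OO$ then guarantees that the sequence of approximations converges to an honest $h\in t^{\,n-\ell}\frg\otimes\OO$, so $g=1+h\in G(\OO)$ satisfies $\Ad(g)\nabla=\nabla'$ and hence $\nabla'\sim\nabla$. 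Choosing $n$ equal to $n_0+\ell$, enlarged if necessary to make the iteration contract, yields the integer required in the statement.
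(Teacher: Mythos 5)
Your proof is correct and follows essentially the same route as the paper: linearize the gauge action to the operator $\ad(\nabla)$, establish that $\ad(\nabla)$ hits everything of sufficiently high $t$-adic valuation with only a bounded loss of valuation, and then kill the quadratic error by successive approximation in the $t$-adically complete ring. The only divergence is in how that linear solvability is justified --- the paper invokes the Levelt--Turrittin normal form where you use a cyclic vector and the indicial polynomial --- and both are standard and equally valid here.
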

\begin{proof}
Fix $\nabla=d+A\in\Conn(\K)$, and let $d\ge 0$ be its order of pole,
so that \[A\in t^{-d}\frg\otimes_\kk\Omega_{\OO/\kk}.\] Now
consider the induced connection on the vector space $\frg\otimes_\kk\K$: 
\[\ad(\nabla):\frg\otimes_\kk\K\to\frg\otimes\Omega_{\K/\kk}.\]
There exists $N$ such that
\[\ad(\nabla)(t^k\frg[[t]])\supset t^{k+N}\frg\otimes\Omega_{\OO/\kk}\]
for any $k\ge 0$. (This is easy to see once $\ad(\nabla)$ is written in the Levelt-Turittin normal form.)

We can now choose $n=2N+d+2$. Indeed, if $a:=\nabla'-\nabla\in t^n\frg\otimes\Omega_{\OO/\kk}$, we can
choose $b\in t^{n-N}\frg\otimes\Omega_{\OO}$ such that \[\ad(\nabla)b=[A,b]+db=a.\] Pick $g\in G(\OO)$ such that
\[g\equiv 1-b\mod t^{2(n-N)}.\]
Then 
\begin{align*}
g^{-1}&\equiv 1+b\mod t^{2(n-N)}\\
dg&\equiv db\mod t^{2(n-N)-1}
\end{align*}
and therefore
\[\Ad(g)(\nabla)=d+\Ad(g)A-dg\cdot g^{-1}\equiv d+A-[b,A]+db\mod t^{\min(2(n-N)-d,2(n-N)-1)}.\]
Since $\min(2(n-N)-d,2(n-N)-1)\ge n+1$, we see that
\[\Ad(g)(\nabla)\equiv\nabla+a=\nabla'\mod t^{n+1}.\]
We can now iterate. 
\end{proof}

\subsection{From global to local opers}
Let $X$ be a smooth connected projective curve over $\kk$ and $\F=\kk(X)$. Fix $n$ distinct points $x_1,\dots,x_n\in X$, and
let $\K_i$ be the completion of $\F$ at $x_i$ ($i=1,\dots,n$). Lemma~\ref{lm:gauge class is open} 
implies the following statement:

\begin{corollary}\label{co:approx}
For any $n$ connections $\nabla_i\in\Conn_G(\K_i)$ ($i=1,\dots,n$),
there exists $\nabla\in\Conn_G(\F)$ such that $\nabla\sim\nabla_i$ in $\Conn_G(\K_i)$ for $i=1,\dots,n$.
\end{corollary}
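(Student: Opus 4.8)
The plan is to deduce Corollary~\ref{co:approx} from Lemma~\ref{lm:gauge class is open} by a density-plus-openness argument. The key point is that $\Conn_G(\F)$ is dense in each $\Conn_G(\K_i)$ with respect to the $t$-adic topology: given target connections $\nabla_i = d+A_i$ with $A_i \in \frg\otimes_\kk\Omega_{\K_i/\kk}$, I want to produce a single rational connection $\nabla = d+A$ with $A\in\frg\otimes_\kk\Omega_{\F/\kk}$ whose Laurent expansion at each $x_i$ agrees with $A_i$ to high enough order. By Lemma~\ref{lm:gauge class is open}, applied at each $x_i$, there is an integer $n_i$ such that any connection differing from $\nabla_i$ by a term in $t^{n_i}\frg\otimes\Omega_{\OO_i/\kk}$ is gauge-equivalent to $\nabla_i$ over $\K_i$. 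Setting $n=\max_i n_i$, it therefore suffices to find one $A\in\frg\otimes_\kk\Omega_{\F/\kk}$ whose expansion at $x_i$ matches $A_i$ modulo $t^{n}$ simultaneously for all $i$.

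\textbf{First} I would reduce to a componentwise statement. Fixing a basis of $\frg$ over $\kk$ and trivializing $\Omega_{\F/\kk}$ by a single non-constant rational function (so that $\Omega_{\F/\kk}=\F\,dt$), the problem becomes purely scalar: for finitely many prescribed principal-part data at the points $x_1,\dots,x_n$, find a rational function on $X$ with those prescribed Laurent tails up to order $n$ at each $x_i$. \textbf{Next}, this is a standard consequence of the approximation properties of the function field: the points $x_1,\dots,x_n$ are distinct, so by weak approximation (or equivalently by Riemann--Roch, allowing poles of large enough order at one auxiliary point away from the $x_i$), the diagonal map from $\F$ into the product $\prod_i \K_i/t_i^{n}\OO_i$ of truncated local fields is surjective. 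Carrying out the approximation for each of the finitely many coordinate functions of $A$ and reassembling gives the desired rational connection matrix $A$.

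\textbf{The main obstacle}, such as it is, is bookkeeping rather than a genuine difficulty: one must be careful that weak approximation delivers a function regular at all points of $X$ except possibly the $x_i$ and one auxiliary point, and that the resulting $A$ indeed lies in $\frg\otimes_\kk\Omega_{\F/\kk}$ with the correct local expansions. A clean way to organize this is to note that for each $i$ the target $A_i$ has some finite pole order, so the prescribed data at $x_i$ is a genuine truncated Laurent tail in finitely many coefficients; weak approximation for the field $\F$ with respect to the finitely many inequivalent places $x_1,\dots,x_n$ then produces $A$ meeting all the congruences at once. \textbf{Finally}, applying Lemma~\ref{lm:gauge class is open} at each $x_i$ to the connection $\nabla=d+A$ so constructed yields $\nabla\sim\nabla_i$ in $\Conn_G(\K_i)$ for every $i$, which is exactly the assertion of the corollary. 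I expect the entire argument to be short, the only subtlety being the simultaneous nature of the approximation across the $n$ points, which weak approximation handles directly.
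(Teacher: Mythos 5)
Your proof is correct and follows the same route as the paper, which simply says that by Lemma~\ref{lm:gauge class is open} it suffices to choose $\nabla$ approximating each $\nabla_i$ closely enough; you have merely spelled out the implicit approximation step (matching truncated Laurent tails at finitely many points via weak approximation or Riemann--Roch), which is exactly the intended argument.
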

\begin{proof} By Lemma~\ref{lm:gauge class is open}, it suffices to choose $\nabla$ that approximates $\nabla_i$
closely enough.
\end{proof}

Let us now show that Theorem~\ref{th:main} implies Theorem~\ref{th:FZ}.

\begin{proof}[Proof of Theorem~\ref{th:FZ}] Let $X$ be a smooth connected projective curve.
Fix $x_1,x_2\in X$, $x_1\ne x_2$. Let us show that any $\nabla_1\in\Conn_G(\K_1)$ admits an oper structure. Pick any irreducible $\nabla_2\in\Conn_G(K_2)$, which exists by Lemma~\ref{lm:irred}. By Corollary~\ref{co:approx}, there exists $\nabla\in\Conn_G(\F)$ such that $\nabla\sim\nabla_i$ in $\Conn_G(\K_i)$ ($i=1,2$).
Then $\nabla$ is irreducible because $\nabla_2$ is irreducible.
Now Corollary~\ref{co:operirreducible} implies that $\nabla\sim\nabla'$ for some $\nabla'\in\Oper_G(\F)$. Therefore,
$\nabla_1\sim\nabla'$, and $\nabla'\in\Oper_G(\K_1)$, as required.
\end{proof}

\bibliographystyle{abbrv}
\bibliography{opers}
\end{document}